\theoremstyle{plain}
\newtheorem*{theorem*}{Theorem}
\newtheorem*{lemma*} {Lemma}
\newtheorem*{corollary*} {Corollary}
\newtheorem*{proposition*}{Proposition}
\newtheorem*{conjecture*}{Conjecture}
\newtheorem{theorem}{Theorem}[section]
\newtheorem{lemma}[theorem]{Lemma}
\newtheorem*{theorem1*}{Theorem 1}
\newtheorem*{theorem2*}{Theorem 2}
\newtheorem*{theorem3*}{Theorem 3}
\newtheorem{proposition}[theorem]{Proposition}
\newtheorem{question}[theorem]{Question}
\newtheorem{ther}{Theorem}
\newtheorem{prop}[ther]{Proposition}
\theoremstyle{remark}
\newtheorem*{remark}{Remark}
\newtheorem*{remarks}{Remarks}
\newtheorem*{example*}{Example}
\newtheorem*{claim}{Claim}
\theoremstyle{definition}
\def\op{\operatorname}
\def\G{\Gamma}
   \def\Z{\Bbb{Z}} \def\R{\Bbb{R}} \def\C{\Bbb{C}} 
 \def\a{\alpha}  \def\tor{\mbox{Tor}} \def\bp{\begin{pmatrix}}
\def\sm{\setminus} \def\ep{\end{pmatrix}} \def\bn{\begin{enumerate}} \def\Hom{\mbox{Hom}}
   \def\en{\end{enumerate}}
\def\ba{\begin{array}} \def\ea{\end{array}}  
 \def\S{\Sigma}  \def\a{\alpha} \def\b{\beta}
\def\ker{\mbox{Ker}}\def\be{\begin{equation}} \def\ee{\end{equation}}
    \def\rk{\op{rk}}
\def\alb{\mbox{alb}_{X} \colon X \to \mbox{Alb}(X)}
\def\op{\operatorname}
\def\orb{\op{orb}}
\def\alb{\op{Alb}}
\def\jac{\op{Jac}}
\begin{document}
\title{Virtual algebraic fibrations of K\"ahler groups}

\author{Stefan Friedl}
\address{Fakult\"at f\"ur Mathematik, Universit\"at Regensburg, Germany}
\email{sfriedl@gmail.com}

\author{Stefano Vidussi}
\address{Department of Mathematics, University of California,
Riverside, CA 92521, USA} \email{svidussi@ucr.edu}
\date{\today}

\begin{abstract} This paper stems from the observation (arising from work of T. Delzant) that ``most" K\"ahler groups $G$ virtually  algebraically fiber, i.e. admit a finite index subgroup that maps onto $\Z$ with finitely generated kernel. For the remaining ones,  the Albanese dimension of all finite index subgroups is at most one, i.e. they  have  \textit{virtual} Albanese dimension $va(G) \leq 1$. We show that the existence of algebraic fibrations has implications in the study of coherence and  higher BNSR invariants of the fundamental group of aspherical K\"ahler surfaces. The class of K\"ahler groups with $va(G)  = 1$ includes virtual surface groups. Further examples exist; nonetheless, they exhibit a strong relation with surface groups. In fact, we show that the Green--Lazarsfeld sets of groups with $va(G) = 1$ (virtually) coincide with those of surface groups, and furthermore that the only virtually RFRS groups with $va(G) = 1 $ are virtually surface groups.  
\end{abstract}

\maketitle

\section{Introduction} \label{sec:intro}
This paper is devoted to the study of some virtual properties of K\"ahler groups, i.e.\ fundamental groups of compact K\"ahler manifolds. (Recall that if $\mathcal{P}$ is a property of groups, we say that a group $G$ is \textit{virtually} $\mathcal{P}$ if a finite index subgroup $H \leq_{f} G$ is $\mathcal{P}$.)

The guiding principle is to understand if some of the virtual properties of fundamental groups of irreducible $3$--manifolds with empty or toroidal boundary,  recently emerged from the work of Agol, Wise \cite{Ag13,Wi09,Wi12}  and their collaborators, have a counterpart for K\"ahler groups. Admittedly, there is no \textit{a priori} geometric reason to expect any analogy. However this approach seems to be not completely fruitless:  in \cite{FV16} we investigated some consequences of the fact that both classes of groups satisfy a sort of ``relative largeness" property, namely that any epimomorphism $\phi\colon  G \to \Z$ with {\em infinitely generated} kernel virtually factorizes through an epimorphism to a free nonabelian group. (This is a nontrivial result for both classes.) 

In this paper we investigate, in a sense, the opposite phenomenon, namely the existence of epimorphisms $\phi\colon  G \to \Z$ with {\em finitely generated} kernel. Or, stated otherwise,  whether $G$ is an extension of $\Z$ by a finitely generated subgroup. This condition has been conveniently referred to  in \cite{JNW17} by saying that $G$ \textit{algebraically fibers} and here we will adhere to that terminology. Again, thanks to the  recent results of Agol, Wise and collaborators (see \cite{AFW15} for accurate statements and references) the emerging picture is that ``most" freely indecomposable $3$--manifold groups (e.g.\ hyperbolic groups) virtually algebraically fiber. This result has triggered recent interest in the  study of algebraic fibration for various classes of groups, and relevant results have appeared, including during the preparations of this manuscript, see \cite{FGK17,JNW17,Ki18}.

We have tasked ourselves with the purpose of understanding virtual algebraic fibrations in the realm of K\"ahler groups.

The starting point amounts to reinterpreting geometrically Delzant's results on the Bieri--Neumann--Strebel  invariants of K\"ahler groups (\cite{De10}), and is possibly known at least implicitly to those familiar with that result. In particular, it entails a geometric characterization of K\"ahler manifolds whose fundamental group does not virtually algebraically fiber. To state this result, recall that the Albanese dimension $a(X) \geq 0$ of a K\"ahler manifold is defined as the complex dimension of the image of $X$ under the Albanese map $\alb$. We define the \textit{virtual} Albanese dimension $va(X)$ to be the supremum of the Albanese dimension of all finite covers of $X$. (This definition replicates that of virtual first Betti number $vb_1$, that will be as well of use in what follows.)   The property of having Albanese dimension equal to zero or equal to one is  determined by the fundamental group $G = \pi_1(X)$ alone (see e.g.\ Proposition \ref{prop:samealb}); because of that, it makes sense to talk of (virtual) Albanese dimension of a K\"ahler group as an element of $\{0,1,>1\}$.
 With this in mind we have the following:

\begin{ther} \label{thm:main} Let $G$ be a K\"ahler group. Then either $G$ virtually algebraically fibers, or $va(G) \leq 1$.
\end{ther} 

This statement is, in essence, an alternative: the intersection are groups $G$ that have a finite index subgroup  $H \leq _{f} G$ with $b_1(H) = vb_1(G) = 2$
 such that the commutator subgroup $[H,H]$ is finitely generated. (Such a $H$ appears as fundamental group of a genus $1$ Albanese pencil without  multiple fibers.) We could phrase this theorem as an alternative, but the form above fits well with what follows.

Theorem \ref{thm:main} kindles interest in identifying the class of K\"ahler groups with $va(G) \leq 1$, and in what follow we summarize what we know about this class.

To start, a K\"ahler group has $va(G) = 0$ if and only if all its finite index subgroup have finite abelianization, or equivalently $vb_1(G) = 0$. All finite groups fall in this class, but there exist infinite examples as well. An example that is easy to describe is $Sp(2n,\Z)$ for $n \geq 2$ (see \cite{To90}). Also, all cocompact lattices in hermitean symmetric spaces of higher rank belong to this class.

The class of K\"ahler groups with  $va(G) = 1$ contains some obvious examples, namely virtual surface groups (i.e.\ fundamental groups of compact Riemann surfaces of positive genus). In general, the properties of the Albanese map give a tight relation between K\"ahler groups with $va(G) = 1$ and surface groups: When a K\"ahler manifold $X$  has Albanese dimension one, it is well--known that the Albanese map has smooth image and connected fibers (see e.g. \cite[Proposition I.13.9]{BHPV04}), hence determines a genus $g = q(X)$ pencil  $f \colon X \to \Sigma$, referred to as \textit{Albanese pencil}. (Here $q(X) := \frac{1}{2}b_1(X)$ denotes the irregularity of $X$.) When $G = \pi_1(X)$ satisfies $va(G)  = 1$, the Albanese pencil of $X$ lifts to an Albanese pencil ${\tilde f}\colon {\widetilde X} \to {\widetilde \Sigma}$ for all finite covers ${\widetilde X} \to {X}$.

Virtual surface groups do not exhaust the class of groups with $va(G) = 1$: it is not hard to give further (albeit unsophisticated) examples by taking the product of a surface group with $Sp(2n,\Z)$, $n \geq 2$. That said, we are not aware of any subtler construction that does not hinge on the existence of K\"ahler groups with $vb_{1} = 0$. It would interesting to decide if such constructions exist. 

We analyze in Section \ref{sec:va1} the implications of the existence of algebraic fibrations in the context of aspherical K\"ahler surfaces. This allows us to refine some results about coherence of their fundamental group, which appear in \cite{Ka98,Ka13,Py16}. (Recall that a group is coherent if all its finitely generated subgroups are finitely presented.) Combining these results with ours yields the following: 

\begin{ther} \label{thm:co} Let $G$ be a group with $b_1(G) > 0$  which is the fundamental group of  an aspherical K\"ahler surface $X$; then $G$ is not coherent, except for the case where it is virtually the product of $\Z^2$ by a surface group, and perhaps for the case where $X$ is finitely covered by a Kodaira fibration of virtual Albanese dimension one. \end{ther}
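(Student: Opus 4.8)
The plan is to play Theorem~\ref{thr:main} against the Enriques--Kodaira classification of surfaces, using the incoherence theorems of Kapovich and Py \cite{Ka98,Ka13,Py16} to dispose of the ``generic'' strata and reserving the two listed families for the situations where those theorems do not bite. Throughout, $X'$ denotes a suitable finite cover of $X$ and $G'=\pi_1(X')$; recall that coherence is inherited by finite-index subgroups and passes to finite-index overgroups, so it suffices to argue up to commensurability.

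First apply Theorem~\ref{thr:main}: either $G$ virtually algebraically fibers, or $va(G)\leq 1$. Since $b_1(G)>0$, a transfer argument shows that every finite cover of $X$ has positive first Betti number, so $va(G)\geq 1$ and, in the second alternative, $va(G)=1$. Then the Albanese image of $X$ is a curve, and Stein factorization of the Albanese map produces a finite cover $X'$ carrying a holomorphic fibration $X'\to C$ with connected fibers over a curve $C$ of genus $\geq 1$; a generic fiber $F$ has genus $\geq 1$ as well, since a rational fiber would make $X'$ ruled and hence not aspherical. If $g(F)=1$, the fibration becomes a smooth elliptic fibration after a further finite cover, and a smooth elliptic Kähler surface is virtually a product $T^2\times(\text{curve})$; thus $G$ is virtually $\Z^2$ times a surface group, the first exception --- and this group is coherent, since a finitely generated subgroup of $\Z^2\times(\text{surface})$ is an extension of a free or surface group by a finitely generated abelian group, hence finitely presented. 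If $g(F)\geq 2$, the fibration is either non-isotrivial and smooth --- whence $X'$ is a Kodaira fibration, of virtual Albanese dimension one in the present situation, the second exception --- or isotrivial, which an Albanese-dimension computation excludes when $va(G)=1$, or it has singular fibers, which is precisely the setting in which \cite{Ka98,Py16} yield incoherence of $G$.

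Suppose instead that $G$ virtually algebraically fibers: pick a finite cover $X'$ of $X$, with $G'=\pi_1(X')$, and an epimorphism $\phi\co G'\onto\Z$ whose kernel $N=\ker\phi$ is finitely generated, so that $N$ is a finitely generated subgroup of $G$. If $N$ is not finitely presented, $G$ is incoherent and we are done; so assume it is. Then $N$ is a finitely presented, infinite, infinite-index normal subgroup of the $\op{PD}_4$ group $G'$. By the structure theorems of Kapovich and Py \cite{Ka98,Ka13,Py16}, unless $G'$ belongs to a short list of exceptions --- where it is virtually abelian or virtually $\Z^2$ times a surface group --- the surface $X'$ is, up to finite cover, a smooth holomorphic fiber bundle over a (possibly orbifold) curve with $N$ commensurable to the fiber group; since $G'/N\cong\Z$ is torsion-free, this would force the orbifold fundamental group of the base to be virtually infinite cyclic, impossible for a compact $2$-orbifold (such a group is finite, virtually $\Z^2$, or hyperbolic). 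Hence the algebraic fiber $N$ can be finitely presented only for the groups on the exceptional list, so $G$ is incoherent unless it is virtually $\Z^2$ times a surface group; in particular this disposes of surfaces isogenous to a product, of ball quotients with $b_1>0$ (cf.\ \cite{Ka13}), and of Kodaira fibrations of virtual Albanese dimension $2$. Combined with the previous paragraph, this proves the statement.

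The difficulty is twofold. Technically, one must extract from \cite{Ka98,Ka13,Py16} the fibration dichotomy in exactly the form used above and verify that its ``exceptional list'' is precisely the class of groups virtually $\Z^2\times(\text{surface})$; the Kähler hypothesis is essential here, entering through Delzant's description of the Bieri--Neumann--Strebel invariant $\Sigma^1$ and, for the sharper conclusion that $N$ cannot be finitely presented, through the second BNSR invariant $\Sigma^2$. Conceptually, the genuinely open point --- and the source of the word ``perhaps'' in the statement --- is the coherence of the fundamental group of a Kodaira fibration of virtual Albanese dimension one: there the fiber subgroup is an honest finitely presented surface group, so none of the obstructions above applies, and at present neither a proof of coherence nor an example of a finitely generated subgroup that fails to be finitely presented is available.
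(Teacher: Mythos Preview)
Your argument has a genuine gap in the ``virtually algebraically fibers'' case. You assume $N=\ker\phi$ is finitely presented and then invoke ``structure theorems of Kapovich and Py'' to conclude that $X'$ is, up to finite cover, a smooth holomorphic fiber bundle over a curve with $N$ commensurable to the fiber group. No such theorem is stated or proved in \cite{Ka98,Ka13,Py16}; those papers do not give a classification of finitely presented normal subgroups of $PD_4$ K\"ahler groups with infinite cyclic quotient. The step where you then rule this out because a compact $2$--orbifold cannot have virtually cyclic fundamental group is therefore built on air. The paper's argument at this point is entirely different and much more direct: it appeals to Hillman \cite[Theorem~4.5(4)]{Hil02}, which says that for a $PD_4$ group $G'$ with an epimorphism $\phi\colon G'\to\Z$ whose kernel is finitely generated, that kernel is $FP_2$ if and only if the Euler characteristic vanishes. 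Thus if $e(X)\neq 0$ the kernel is not finitely presented and $G$ is incoherent; if $e(X)=0$ the classification of surfaces and the Zeuthen--Segre formula force $X$ to be finitely covered by a product $T^2\times\Sigma_g$, giving the first exception (coherence then follows from \cite{BHMS02}).

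Your treatment of the $va(G)=1$ case is also loose. You split on the genus of the Albanese fiber and, when $g(F)\geq 2$ with singular fibers, assert that ``this is precisely the setting in which \cite{Ka98,Py16} yield incoherence''; but those references do not provide an off-the-shelf incoherence statement triggered merely by the presence of singular fibers. The paper instead uses Hillman \cite[Theorem~1.19]{Hil02}: in the short exact sequence $1\to K\to G\to\Gamma\to 1$ with $\Gamma$ a surface group and $K$ finitely generated, either $K$ fails to be finitely presented (so $G$ is incoherent) or $K$ is itself a surface group. In the latter case one shows, via the Zeuthen--Segre formula and the absence of multiple fibers, that the Albanese pencil is smooth; isotriviality would force $va(X)=2$, so the pencil is a Kodaira fibration, the second exception. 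Note also that your $g(F)=1$ subcase cannot produce the $\Z^2\times(\text{surface})$ exception inside the $va(G)=1$ analysis: a product $T^2\times\Sigma_g$ with $g\geq 1$ has Albanese dimension two, so that exception belongs to the other branch of the dichotomy.
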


(A {\em Kodaira fibration} is a smooth non-isotrivial pencil of curves: in particular, fibers and base have genera at least $3$ and $2$ respectively.) We are not aware of the existence of Kodaira fibrations of virtual Albanese dimension one (Question \ref{qu:bq}).

The techniques used in the proof of the theorem above actually entails the existence of K\"ahler groups whose second Bieri-Neumann-Strebel-Renz (henceforth BNSR) invariant is {\em strictly} contained in the first (see Proposition \ref{proposition:bnsr}) and are not a direct product. We state here a striking instance of this:

\begin{prop} \label{prop:ak} Let $X$ be one of the Kodaira fibrations described by Atiyah and Kodaira; then the BNSR invariants of $G = \pi_1(X)$ satisfy \[  \S^2(G) \subsetneq \S^1(G) \subsetneq S(G). \] \end{prop}

This (and Proposition \ref{proposition:bnsr}) may well be the first results on higher BNSR invariants of nontrivial extensions of surface groups by surface groups: see also \cite[Question 11(4)]{Hil15}, which raises the issue of coherence for this class of groups. Remarkably, the proof of Proposition \ref{prop:ak} uses in crucial manner the fact that $G$ is K\"ahler: we are not aware of any other means to show that $\S^1(G)$ is nonempty for other surface bundles. Note that as we discuss in Section  \ref{sec:va1} the fundamental group of $X$ (and other double fibered Kodaira fibrations) injects in the mapping class groups of the once--punctured fibers.

When $va(G)  =1$, the relation between $X$ and $\Sigma$ induced by the Albanese pencil $f \colon X\to \S$ is stronger than the isomorphism of the first cohomology groups. Indeed, it entails a relation between the Green--Lazarsfeld sets of their (orbifold) fundamental groups (whose definition we recall in Section \ref{sec:va1}): Given an Albanese pencil, we refer to the induced map on (orbifold) fundamental groups $f \colon G \to \G$ (where $G := \pi_1(X)$ and  $\G := \pi^{\orb}_1(\S)$) as Albanese map as well, and we have the following: 

\begin{ther} \label{thm:gli} Let $G$ be a group with $va(G) = 1$.
Perhaps  after going to a finite index normal subgroup,  the Albanese map $f\colon G \to \G$ induces an isomorphism of the Green--Lazarsfeld sets
\[ {\widehat f}\colon {W}_{i}({\G} ) \stackrel{\cong}{\longrightarrow}  {W}_{i}(G).\]  \end{ther} 

Proceeding in another direction, namely restricting the class of K\"ahler groups by imposing residual properties that mirror those of $3$--manifold groups, we obtain a refinement to Theorem \ref{thm:main} that can be thought of as an analogue (with much less work on our side) to Agol's virtual fiberability result  (\cite{Ag08}) for $3$--manifold groups that are virtually \textit{residually finite rationally solvable} (henceforth RFRS, see Section \ref{sec:va1} for the definition).

\begin{ther}  \label{thm:vrfrs}  Let $G$ be a K\"ahler group that is virtually RFRS. Then either $G$  virtually algebraically fibers, or it is virtually a surface group.  \end{ther}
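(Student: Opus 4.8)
The plan is to combine Theorem~\ref{thr:main} with a descent argument along an RFRS tower. We may assume $G$ is infinite (a finite group neither virtually algebraically fibers nor is commensurable to a surface group, so the statement is to be read for infinite groups). First I would record that an infinite virtually RFRS group $G$ has $vb_1(G)>0$: picking $H\leq_f G$ RFRS with tower $H=H_0\geq H_1\geq\cdots$, $\bigcap_i H_i=1$ and $H_{i+1}\supseteq\ker\bigl(H_i\to H_1(H_i;\Z)/\mathrm{tors}\bigr)$, if $b_1(H_i)=0$ for every $i$ then each such kernel is all of $H_i$, whence $H_{i+1}=H_i$ and $\bigcap_i H_i=H\neq 1$, a contradiction. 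Since $b_1>0$ makes the Albanese map nonconstant, $va(G)\geq 1$, so if $G$ does not virtually algebraically fiber, Theorem~\ref{thr:main} gives $va(G)=1$.

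Next I pass to a convenient finite cover. Choose $H'\leq_f G$ that is RFRS and $H''\leq_f G$ with $a(\widetilde X_{H''})=1$ (such an $H''$ exists since $va(G)=1$ is a maximum), and set $H:=H'\cap H''$, which is again RFRS (subgroups of RFRS groups are RFRS). Writing $\widetilde X:=\widetilde X_H$, we have $a(\widetilde X)\ge a(\widetilde X_{H''})=1$ (the Albanese dimension does not decrease under finite covers) and $a(\widetilde X)\le va(G)=1$, so $a(\widetilde X)=1$. Thus the Albanese map of $\widetilde X$, with codomain restricted to its image, is a genus $g=q(\widetilde X)\ge 1$ pencil $f\colon\widetilde X\to\Sigma$. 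Let $\pi_1^{\orb}(\Sigma)$ be the orbifold fundamental group of $\Sigma$ carrying a cone point of order $m_j$ at each multiple fibre $m_jF_j$ of $f$; there is the fibration exact sequence $\pi_1(F)\to\pi_1(\widetilde X)\to\pi_1^{\orb}(\Sigma)\to 1$ with $F$ a general fibre, and I set $N:=\ker\bigl(\pi_1(\widetilde X)\onto\pi_1^{\orb}(\Sigma)\bigr)$, i.e.\ the image of $\pi_1(F)$. If $N=1$ then $\pi_1(\widetilde X)\cong\pi_1^{\orb}(\Sigma)$; since $g\ge 1$ the underlying $2$--orbifold is good with non-positive orbifold Euler characteristic, hence admits a finite cover that is a closed surface of positive genus, so $\pi_1^{\orb}(\Sigma)$, and therefore $G$, is commensurable to a surface group and we are done. (This is the case of rational general fibre; the case $N\neq 1$, where the Albanese pencil has fibres of positive genus, is what remains to be excluded.)

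So suppose $N\neq 1$; I would obtain a contradiction by showing that $N$ is contained in every term of a RFRS tower $H=H_0\geq H_1\geq\cdots$ of $H=\pi_1(\widetilde X)$ (with $\bigcap_i H_i=1$ and $H_{i+1}\supseteq\ker(H_i\to H_1(H_i;\Z)/\mathrm{tors})$), which forces $N\leq\bigcap_i H_i=1$. Let $\widetilde X_i\to\widetilde X$ be the cover with $\pi_1(\widetilde X_i)=H_i$. Since $va(G)=1$ we again have $a(\widetilde X_i)=1$, and, as recalled just before the theorem, the Albanese pencil of $\widetilde X_i$ is the lift of $f$ along a finite orbifold covering $\Sigma_i\to\Sigma$; in particular the restriction $H_i\hookrightarrow H\onto\pi_1^{\orb}(\Sigma)$ factors through $\pi_1^{\orb}(\Sigma_i)\hookrightarrow\pi_1^{\orb}(\Sigma)$, so $N\leq H_i$ maps trivially to $\pi_1^{\orb}(\Sigma_i)$, hence to $H_1(\Sigma_i^{\orb};\Q)$. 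Now $b_1(\widetilde X_i)=2q(\widetilde X_i)=2g(\Sigma_i)=b_1(\Sigma_i^{\orb})$, so the natural surjection $H_1(\widetilde X_i;\Q)\onto H_1(\Sigma_i^{\orb};\Q)$ is an isomorphism; therefore the image of $N$ in $H_1(\widetilde X_i;\Q)$ vanishes, so the composite $N\hookrightarrow H_i\to H_1(H_i;\Z)/\mathrm{tors}$ is zero (the target being torsion-free), i.e.\ $N\leq\ker(H_i\to H_1(H_i;\Z)/\mathrm{tors})\leq H_{i+1}$. As $N\leq H_0$, induction on $i$ finishes the argument. Thus $N=1$ always, and the theorem follows from the case analysis above.

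The step I expect to be the main obstacle is the bookkeeping that makes the descent legitimate: one must verify that moving to a cover $\widetilde X_i$ in the RFRS tower reproduces the \emph{same} subgroup $N$ as the kernel of the map to the (orbifold) Albanese base, that is, that the factorization $H_i\to\pi_1^{\orb}(\Sigma_i)\hookrightarrow\pi_1^{\orb}(\Sigma)$ genuinely holds and is compatible with the cone-point (multiple-fibre) data. This rests on the fact, available from the discussion preceding the theorem, that for $va(G)=1$ the Albanese pencil lifts to the Albanese pencil of every finite cover, combined with the (standard but somewhat delicate) compatibility of the orbifold structures under such lifts; granting this, the homological input $b_1=2q=2g(\Sigma_i)$ is immediate from $a(\widetilde X_i)=1$, and the RFRS condition does the rest. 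A secondary point requiring care is the identification of $\pi_1(\widetilde X)$ with $\pi_1^{\orb}(\Sigma)$ when $N=1$, together with the classical fact that a $2$--orbifold group of genus $\ge 1$ is commensurable to a surface group.
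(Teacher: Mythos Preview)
Your argument is correct and follows essentially the same approach as the paper: both reduce via Theorem~\ref{thr:main} to $va(G)=1$, pass to a RFRS finite-index subgroup with Albanese pencil $f$, and then use that for every term $H_i$ of the RFRS tower the lifted pencil is again Albanese so that $H_1(H_i)/\mathrm{tors}\cong H_1(\Gamma_i)$, forcing the kernel of $f$ to lie in $\bigcap_i H_i=\{1\}$. The only cosmetic differences are that the paper arranges $\Gamma$ to be an honest surface group before running the descent (yielding ``virtually a surface group'' rather than merely ``commensurable''), and phrases the key step elementwise (for $\gamma\in G_j\setminus G_{j+1}$, $f(\gamma)\neq 1$) rather than as the containment $N\le H_{i+1}$.
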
 

To date, the infinite K\"ahler groups known to be RFRS are subgroups of the direct product of surface groups and abelian groups. This is a remarkable but not yet completely understood class of K\"ahler groups: in \cite[7.5 sbc--Corollary]{DG05} some conditions for a K\"ahler group to be virtually of this type are presented. Further results on this class appear in \cite{DPS09} and \cite{LI17}. 

We should add that the result of Theorem \ref{thm:vrfrs} can now be thought as a consequence of the recent results of \cite{Ki18}, using some standard facts on K\"ahler groups.

\subsection*{Structure of the paper} Section \ref{sec:ref} discusses some preliminary results on the Albanese dimension of K\"ahler manifolds and groups, as well as the proof of Theorem \ref{thm:main}. Section \ref{sec:va1} is devoted to the study of groups of virtual Albanese dimension one, and contains the proofs of Theorems \ref{thm:co}, \ref{thm:gli}, and \ref{thm:vrfrs}.

 In order to keep the presentation reasonably self--contained, we included some fairly classical results, for which we could not find a formulation in the literature suitable for our purposes. 


\subsection*{Acknowledgment.} The authors want to thank Thomas Delzant, Dieter Kotschick, Mahan Mj, and Matt Stover for their comments and clarifications to previous versions of this paper, as well as the anonymous referee. Also, they gratefully acknowledge the support provided by the SFB 1085 `Higher
Invariants' at the University of Regensburg, funded by the Deutsche
  Forschungsgemeinschaft (DFG), and by the Simons Foundation Collaboration Grant For Mathematician 524230.

\section{Albanese dimension and algebraic fibrations} \label{sec:ref}
We start with some generalities on pencils on K\"ahler manifolds and the properties of their fundamental group. The reader can find in the monograph \cite{ABC$^+$96} a detailed discussion of K\"ahler groups, and the r\^ole they play in determining pencils on the underlying K\"ahler manifolds.
Given a genus $g$ pencil on $X$ (i.e.\ a surjective holomorphic map
 with connected fibers $f \colon X \to \S$  to a surface with $g = g(\S)$) we can consider the homotopy--induced epimorphism $f \colon \pi_1(X)  \to \pi_1(\S)$. In presence of multiple fibers we have a factorization 
$f \colon \pi_{1}(X) \to \pi_1^{\orb}(\S) \to \pi_1(\S)$ through a further epimorphism onto $\pi_1^{\orb}(\S)$, the orbifold fundamental group of $\S$ associated to the pencil $f$, with orbifold points and multiplicities corresponding to the multiple fibers of the pencil. throughout the paper we write  $G := \pi_1(X)$ and $\G := \pi_1^{\orb}(\S)$.  The factor epimorphism, that by slight abuse of notation we denote as well as $f \colon G \to \G$,  has finitely generated kernel so we have the short exact sequence of finitely generated groups \[  1 \rightarrow K \rightarrow  G \stackrel{f}\rightarrow \G \rightarrow 1 \]
(see e.g.\ \cite{Cat03} for details of the above). 

We have the following two results about K\"ahler manifolds of (virtual) Albanese dimension one. These are certainly well--known to the experts (at least implicitly), and we provide proofs for completeness.

\begin{proposition} \label{prop:samealb} Let $X$ be a K\"ahler manifold and let $G = \pi_1(X)$ be its fundamental group. If $X$ has Albanese dimension $a(X) \leq 1$, any K\"ahler manifold with isomorphic fundamental group  has the same Albanese dimension as $X$. \end{proposition}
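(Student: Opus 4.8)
The plan is to reduce everything to the single invariant $g := q(X) = \tfrac{1}{2}b_1(X)$, which depends only on $\pi_1(X)$ since $b_1(X) = \rk\big((\pi_1 X)^{\mathrm{ab}}\big)$. Write $G = \pi_1(X)$ and let $Y$ be any K\"ahler manifold with $\pi_1(Y) \cong G$; then $q(Y) = q(X) = g$. First I would dispose of the degenerate cases. If $g = 0$ then $\Alb(X)$ is a point, so $a(X) = 0$, and likewise $a(Y) = 0$. If $g = 1$ then $\Alb(X)$ is an elliptic curve and, since $q(X) > 0$ forces the Albanese map to be nonconstant, $a(X) = 1$; the same reasoning gives $a(Y) = 1$. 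So the content is concentrated in the case $g \geq 2$, where I claim
\[ a(X) = 1 \iff G \ \text{surjects onto the genus-}g\text{ surface group } \pi_1(\Sigma_g), \]
a condition depending only on $G$; granting this, $\pi_1(Y) \cong G$ forces $a(Y) = 1 = a(X)$, completing the proof.

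For the forward implication assume $a(X) = 1$, so that $Z := \alb_X(X)$ is a curve, and recall $Z$ generates $\Alb(X)$. Taking the Stein factorization of the induced map $X \to Z$ produces a pencil $f\colon X \to C$ onto a smooth curve (possibly with multiple fibers) together with a finite morphism $C \to Z \subseteq \Alb(X)$; this morphism induces a surjection $\Jac(C) \onto \Alb(X)$, whence $g(C) \geq q(X)$, while injectivity of $f^{*}$ on holomorphic $1$-forms gives $g(C) \leq q(X)$. Thus $g(C) = q(X) = g$, and $f$ yields the surjection $G \xrightarrow{f_{*}} \pi_1^{\orb}(C) \onto \pi_1(\Sigma_g)$.

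For the converse assume $G \onto \pi_1(\Sigma_g)$ with $g = q(X) \geq 2$. Here I invoke the Siu--Beauville theorem in the form of Catanese (see \cite{Cat03} and the monograph quoted above): this surjection is realized by an orbifold pencil, i.e.\ it factors as $G \xrightarrow{f_{*}} \pi_1^{\orb}(C) \onto \pi_1(\Sigma_g)$ for a holomorphic pencil $f\colon X \to C$. Since the orbifold points contribute only torsion to $H_1(\pi_1^{\orb}(C))$, comparing free ranks of abelianizations gives $g(C) \geq g$; as before $g(C) \leq q(X) = g$, so $g(C) = g$. Then $f^{*}$ carries $H^{0}(C,\Omega^{1})$ isomorphically onto $H^{0}(X,\Omega^{1})$, so every holomorphic $1$-form on $X$ descends to $C$; hence $\Alb(f)$ is an isogeny and $\alb_{X}(X)$ maps finite-to-one onto the curve $\alb_{C}(C)$, giving $a(X) = 1$. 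Together with the two degenerate cases this proves the displayed equivalence, and the proposition.

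The step I expect to be the crux is the converse direction, and specifically the input from the Siu--Beauville--Catanese theory: one needs not merely that some pencil onto a curve of genus $\geq 2$ exists, but that the prescribed surjection onto $\pi_1(\Sigma_g)$ genuinely factors through the orbifold fundamental group of a pencil, so that the genus bookkeeping pins the target curve's genus to exactly $q(X)$. Everything else — Stein factorization, the functoriality and generation properties of the Albanese map, and injectivity of pullback on holomorphic $1$-forms — is standard.
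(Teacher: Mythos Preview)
Your proof is correct, but it takes a genuinely different route from the paper's. The paper argues purely cohomologically: it recalls the invariant $g(X)$, the maximal rank of a cup-product--isotropic submodule of $H^1(X)$, notes (citing \cite{ABC$^+$96}) that $g(X)$ depends only on $\pi_1(X)$, and then invokes Catanese's Castelnuovo--de Franchis theorem \cite{Cat91} to the effect that $a(X)=1$ holds exactly when $g(X)=q(X)$. Since both sides of that equality are $\pi_1$-invariants, the proposition follows in two lines. Your argument instead proves the equivalent criterion ``$a(X)=1 \iff G$ surjects onto $\pi_1(\Sigma_{q(X)})$'' (for $q\geq 2$), establishing the forward direction via Stein factorisation of the Albanese map and the converse via the Siu--Beauville--Catanese factorisation theorem (that any surjection onto a hyperbolic surface group is realised by a pencil), together with a genus count. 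What your approach buys is an explicit, geometrically transparent $\pi_1$-criterion for $a(X)=1$; what it costs is the reliance on the harmonic-map/factorisation machinery, which is a heavier input than Castelnuovo--de Franchis alone. Incidentally, one can shortcut your converse: the surjection $G\twoheadrightarrow\pi_1(\Sigma_g)$ with $g=q(X)$ already pulls back a rank-$g$ Lagrangian in $H^1(\Sigma_g)$ to a rank-$g$ isotropic subspace of $H^1(X)$, giving $g(X)=q(X)$ directly and landing you back in the paper's argument without invoking Siu--Beauville.
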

\begin{proof} The case where $a(X) = 0$ corresponds to manifolds with vanishing irregularity $q(X) = \frac12 b_{1}(X)$ so it is determined by the fundamental group alone. Next, consider a K\"ahler manifold $X$ with positive irregularity.  For any such $X$, the \textit{genus} $g(X)$ is defined as the maximal rank of submodules of $H^1(X)$ isotropic with respect to the cup product. In \cite[Chapter 2]{ABC$^+$96} it is shown that $g(X)$ is in fact an invariant of the fundamental group alone.  As the cup product is nondegenerate, there is a bound \[ g(X) \leq q(X) = \frac12 b_1(X) = \frac12 b_1(G). \] By Catanese's version of the Castelnuovo--de Franchis theorem (\cite{Cat91}), the case where $X$ has Albanese dimension one occurs  exactly for $g(X) = q(X)$. By the above, this equality is determined by the fundamental group alone. \end{proof}

Based on the observations above, we will refer to the Albanese dimension of a K\"ahler  group as an element of the set $\{0,1,>1\}$.  
  
Whenever $G$ has Albanese dimension one, the kernel of the map $f_{*} \colon H_1(G) \to H_1(\G)$ induced by the (homotopy) Albanese map, identified by the Hochschild--Serre spectral sequence with a quotient of the coinvariant homology $H_1(K)_{\G}$ by a torsion group, is torsion (or equivalently $f^{*} \colon H^1(\G) \to H^1(G)$ is an isomorphism).
 Note that, by universality
  of the Albanese map, whenever a K\"ahler manifold $X$ has a pencil $f \colon X \to \S$ such that the kernel of  $f_{*} \colon H_1(G) \to H_1(\G)$ is torsion, the pencil is Albanese.

Let $\pi \colon {\widetilde X} \to X$ be a finite cover of $X$. Denote by $H \leq_{f} G$ the subgroup associated to this cover; the regular cover of $X$ determined by the normal core $N_{G}(H) = \bigcap_{g \in G} g^{-1}Hg \leq_{f} H$ is a finite cover of ${\widetilde X}$ as well. By universality of the Albanese map, the Albanese dimension is nondecreasing when we pass to finite covers.  Therefore (as happens with virtual Betti
 numbers) we can define the virtual Albanese dimension of $X$ in terms of  finite \textit{regular} covers. Given an epimorphism onto a finite group $\a\colon \pi_1(X) \to S$ we have the commutative diagram (with self--defining notation) 
\begin{equation} \label{eq:diag} \xymatrix@R0.5cm{
& 1 \ar[d] &
1 \ar[d] & 1 \ar[d] &\\
1 \ar[r] & \Delta \ar[d] \ar[r] &
H \ar[d] \ar[r]& \Lambda \ar[d] \ar[r]&1\\
1\ar[r]&
K \ar[r]\ar[d]^{\a}&
G \ar[d]^{\a}\ar[r]& \G \ar[d] \ar[r]&1\\
 1\ar[r]& \a(K) \ar[d]
 \ar[r]& S \ar[r] \ar[d]  &
S/ \a(K) \ar[r] \ar[d] &1 \\
& 1 & 1 & 1} 
\end{equation}
Denote by ${\widetilde X}$ and ${\widetilde \Sigma}$ the induced covers of $X$ and $\Sigma$ respectively (so that $\pi_{1}({\widetilde X}) = H$ and $\pi_{1}({\widetilde \Sigma}) = \Lambda$). There exists a pencil ${\tilde f}\colon {\widetilde X} \to {\widetilde \Sigma}$, which is a lift of $f\colon X \to \Sigma$; in homotopy, this corresponds to the epimorphism ${\widetilde f} \colon H \to  \Lambda := \pi_{1}^{\orb}({\widetilde \S})$ appearing in (\ref{eq:diag}) above.

In the next proposition, we illustrate the fact that when $X$ has Albanese dimension one, its Albanese pencil
 $f \colon X \to \S$ is the only irrational pencil
 of $X$, up to holomorphic automorphisms of $\S$. 

\begin{proposition} \label{prop:unique} Let $X$ be a K\"ahler manifold with $a(X) = 1$. Then the Albanese pencil $f \colon X\to \S$  is the unique irrational pencil on $X$ up to holomorphic automorphism of the base. Moreover, if $X$ satisfies $va(X) = 1$, any rational pencil has orbifold base with finite orbifold fundamental group.
 \end{proposition}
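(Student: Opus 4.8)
The plan is to prove the two statements separately, both relying on the Castelnuovo--de Franchis machinery together with the characterization (established in Proposition~\ref{prop:samealb}) that Albanese dimension one for a K\"ahler manifold $Y$ with $q(Y) > 0$ is equivalent to $g(Y) = q(Y)$, and that genus $g$ irrational pencils correspond bijectively (up to automorphism of the base) to maximal isotropic subspaces of $H^1(Y)$ of rank $g$, with rank $\geq 2$ subspaces coming from irrational pencils and rank $1$ subspaces (or rather, decomposable classes) from rational ones.

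First, for the uniqueness of the irrational pencil when $a(X) = 1$: suppose $h \colon X \to \Sigma'$ is another irrational pencil, say of genus $g' \geq 1$. By Catanese's theorem it gives a rank $g'$ isotropic subspace $V_{h} \subseteq H^1(X)$, and the Albanese pencil gives a rank $g = q(X)$ isotropic subspace $V_f$. Since $V_f$ has rank equal to $q(X) = \frac12 b_1(X)$, it is a maximal isotropic subspace, in fact a Lagrangian for the cup-product symplectic-type pairing on the torsion-free part of $H^1$. The key observation is that $f^*\colon H^1(\Sigma) \to H^1(X)$ is an isomorphism onto $V_f$ (from the torsion-kernel remark preceding the proposition), so $V_f$ carries the full first cohomology of $X$. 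Then $h^*H^1(\Sigma') \subseteq V_f$ because any holomorphic $1$-form on $X$ must lie in the span of the pullbacks from the Albanese, and since $h$ is an irrational pencil the classes it pulls back are already in $V_f$; one then argues that the pencil $h$ is recovered from the subspace $h^*H^1(\Sigma')$ via the Stein factorization / Castelnuovo--de Franchis construction, and being contained in $V_f$ forces it to factor through $f$, hence to equal $f$ up to an automorphism of the base (using that both are pencils with connected fibers). I would be careful here to phrase things in terms of $H^1$ and the holomorphic $1$-forms, citing \cite[Chapter 2]{ABC$^+$96} and \cite{Cat91} for the precise correspondence.

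Second, for the statement about rational pencils when $va(X) = 1$: let $r \colon X \to \mathbb{P}^1$ be a rational pencil with orbifold base $\Sigma^{\orb}_r$, and suppose for contradiction that $\pi_1^{\orb}(\Sigma^{\orb}_r)$ is infinite. An infinite $2$-orbifold group on a genus $0$ base has a finite-index subgroup which is the fundamental group of a positive-genus surface (or at least has positive first Betti number); pulling this back via $r_* \colon \pi_1(X) \to \pi_1^{\orb}(\Sigma^{\orb}_r)$ gives a finite cover $\widetilde X \to X$ on which the lifted pencil $\widetilde r \colon \widetilde X \to \widetilde \Sigma$ has positive-genus base, i.e.\ is an irrational pencil on $\widetilde X$. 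But $va(X) = 1$ means $a(\widetilde X) = 1$, so by the uniqueness part already proved $\widetilde r$ must be (up to automorphism) the Albanese pencil of $\widetilde X$; since by the diagram~(\ref{eq:diag}) the Albanese pencil of $\widetilde X$ is the lift of the Albanese pencil $f$ of $X$, this exhibits $\widetilde r$ as a lift of $f$. Chasing back down, this forces $r$ itself to be related to $f$ — but $f$ has positive-genus base while $r$ has genus-zero base, a contradiction unless $q(X) = 0$; and if $q(X) = 0$ then $a(X) = 0 \neq 1$, again a contradiction. Hence $\pi_1^{\orb}(\Sigma^{\orb}_r)$ is finite.

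The main obstacle I anticipate is making the passage ``infinite orbifold group on $\mathbb{P}^1 \Rightarrow$ finite cover with an honest positive-genus pencil'' fully rigorous and compatible with the commutative diagram: one must ensure the finite-index subgroup of $\pi_1^{\orb}$ pulls back to a genuine finite cover of $X$ over which the pencil lifts with connected fibers to a smooth base (Riemann--Hurwitz / orbifold covering theory handles this, but the bookkeeping with multiplicities needs care), and then that the uniqueness statement is being applied to the correct cover. A secondary subtlety is the edge case where the orbifold base is $\mathbb{P}^1$ with few orbifold points: the dichotomy ``finite vs.\ contains a surface group (virtually)'' for triangle-type orbifold groups is classical, but I would want to state it precisely and cite it, treating the spherical, Euclidean, and hyperbolic cases uniformly by noting that only the spherical (finite) case avoids producing a positive-$b_1$ finite-index subgroup.
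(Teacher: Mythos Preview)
Your approach to the first statement (uniqueness of the irrational pencil) is a valid alternative to the paper's, but more roundabout. The paper argues directly via universality of the Albanese map: given another irrational pencil $g\colon X\to\Sigma'$, compose with the Jacobian embedding $j\colon\Sigma'\hookrightarrow\Jac(\Sigma')$; universality yields a factorization through $f$, giving a holomorphic surjection $h\colon\Sigma\to\Sigma'$, and connectivity of fibers forces $h$ to be an isomorphism. This handles all genera $g'\geq 1$ uniformly. Your Castelnuovo--de~Franchis argument is essentially a repackaging of the same idea in terms of holomorphic $1$--forms, but Catanese's bijection as you invoke it only covers maximal isotropic subspaces of dimension $\geq 2$, so the case $g'=1$ is not handled; and the step ``being contained in $V_f$ forces $h$ to factor through $f$'' is exactly the content of the Albanese/Jacobian argument, which you would still have to supply.

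For the second statement, both you and the paper pass to a cover $\widetilde X$ on which the rational pencil lifts to an irrational one $\widetilde r$, and both use part~1 on $\widetilde X$. But the arguments then diverge, and yours has a genuine gap. The paper shows directly that $\widetilde r$ \emph{cannot} be the Albanese pencil of $\widetilde X$: since the original orbifold base has $b_1=0$, the image of $H_1(K)\to H_1(X)$ (with $K=\ker r_*$) has finite index, hence positive rank; a diagram chase then shows the image of $H_1(K)\to H_1(\widetilde X)$ also has positive rank, so $\ker(\widetilde r_*\colon H_1(\widetilde X)\to H_1(\widetilde\Sigma'))$ is not torsion. This contradicts uniqueness immediately. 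You instead conclude $\widetilde r=\widetilde f$ and then assert that ``chasing back down'' forces $r$ and $f$ to be ``related'', with the genus mismatch giving a contradiction. That sentence is where the work actually is, and you have not done it. One way to complete it: from $\widetilde r=\widetilde f$ one gets $\ker r_*=K'=K\cap H\leq_f K=\ker f_*$, hence a surjection $\Gamma'\twoheadrightarrow\Gamma$ with finite kernel, forcing $b_1(\Gamma')=b_1(\Gamma)>0$; but a genus--$0$ orbifold group has $b_1=0$. The obstacles you flag in your last paragraph (the orbifold--cover bookkeeping) are standard and not where the difficulty lies; the missing content is precisely this descent step.
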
 

\begin{proof} By assumption the Albanese pencil $f \colon X \to \S$ factorizes the Albanese map $\alb$. Let $g \colon X \to \S'$ be an irrational pencil, and compose it with the Jacobian map $j \colon \S' \to \jac(\S')$. By universality of the Albanese map we have the  commutative diagram
\[ \xymatrix{  X \ar[r]^f \ar[dr]_g  & \S  \ar@{-->}[d]^h  \ar[r] & \alb(X) \ar[d] \\ 
& \S' \ar[r]^{j} & \jac(\S'). } \]
The map $h \colon \S \to \S'$ is well defined by injectivity of the Jacobian map, and is a holomorphic surjection by universality of the Albanese map. Holomorphic surjections of Riemann surfaces are ramified covers; however, unless the cover is one--sheeted, i.e.\ $h$ is a holomorphic isomorphism, the fibers of $g \colon X \to \S'$ will fail to be connected.  

This argument above does not prevent $X$ from having rational pencils. However, if $X$ has also virtual Albanese dimension one, this imposes constraints on the multiple fibers of those pencils. Recall that orbifolds with infinite $\pi_1^{\orb}(\S)$ are those that are
 flat or hyperbolic, hence admit a finite index normal subgroup that is a surface group with positive $b_1$ (see \cite{Sc83} for this result and a characterization of these orbifolds in terms of the singular points). Therefore, if $X$ were to admit a rational pencil $g \colon X \to \Sigma'$ 
with infinite $\pi_1^{\orb}(\S')$, there would exist an irrational pencil without multiple fibers ${\widetilde g} \colon {\widetilde X} \to {\widetilde \S'}$ covering $g \colon X \to \S'$. We claim that the pencil  ${\widetilde g} \colon {\widetilde X} \to {\widetilde \S'}$ cannot be Albanese: building from the commutative diagram in (\ref{eq:diag}) we have the commutative diagram \[ \xymatrix@=9pt{ 
 1\ar[rr] & & K \ar[rr]\ar[dd]_{\cong}\ar[dr] & & \pi_1({\widetilde X}) \ar'[d][dd]\ar[dr]  \ar[rr]^{\widetilde g} & & \pi_1({\widetilde \S'})\ar[rr]\ar[dr]\ar'[d][dd] & & 1
\\ & & & H_1(K) \ar[dd] \ar[rr] & & H_1({\widetilde X}) \ar[rr]^{\hspace{-4mm}\widetilde g}\ar[dd] &  & H_{1}({\widetilde \S'})
\\ 1 \ar[rr] &  & K \ar'[r][rr]\ar[dr] & & \pi_1(X)  \ar'[r][rr]\ar[dr] & & \pi_1^{\orb}(\S') \ar[rr] & & 1
\\ & & & H_1(K) \ar[rr] & & H_1(X)  & &  } \] The subgroup $\mbox{im}(H_1(K) \to H_1(X)) \leq H_1(X)$ has positive rank (it is a finite index subgroup), hence by commutativity the image $\mbox{im}(H_1(K) \to H_1({\widetilde X})) \leq H_1({\widetilde X})$ has positive rank. It follows that the kernel of the epimorphism ${\widetilde g} \colon H_1({\widetilde X}) \to H_1({\widetilde \S'})$ is not torsion, so ${\widetilde g} \colon {\widetilde X} \to {\widetilde \S'}$ is not Albanese. As ${\widetilde X}$ has Albanese dimension one, this is inconsistent with the first part of the statement. \end{proof}

We are now in a position to prove Theorem \ref{thm:main}. In order to do so, it is both practical and insightful to use the Bieri--Neumann--Strebel invariant of a finitely presented group $G$ (henceforth BNS), for which we refer to \cite{BNS87}. This invariant is an open subset $\Sigma^1(G)$ of the character sphere
 $S(G) := (H^1(G;\R)\sm \{0\})/ \R_{>0}$  of $H^1(G;\R)$. For rational rays, the invariant can be described as follows: a rational ray in  $S(G)$ is determined by a primitive class $\phi \in H^1(G)$. Given such $\phi$, we can write $G$ as Higman--Neumann--Neumann (henceforth HNN) extension $G = \langle A,t|t^{-1}Bt = C\rangle$ for some finitely generated subgroups $B,C \leq A \leq \ker ~ \phi$ with $\phi(t)=1$.
The extension is called ascending (descending) if $A = B$  (resp. $A = C$). By \cite[Proposition 4.4]{BNS87} the extension is  ascending (resp. descending) if and only if the rational ray determined by $\phi$ (resp. $-\phi$) is contained in $\Sigma^1(G)$.

We have the following lemma, which applied to the collection of finite index subgroups of $G$,  implies Theorem \ref{thm:main}:

\begin{lemma} \label{lemma:equivalent} Let $G$ be a  K\"ahler group.  The following are equivalent:
\bn 
\item $G$ algebraically fibers;
\item The BNS invariant  $\Sigma^1(G) \subseteq S(G)$ is nonempty;
\item For any compact K\"ahler manifold $X$ such that $\pi_{1}(X)= G$ either the Albanese map is a genus $1$ pencil without multiple fibers, or $X$ has Albanese dimension greater than one. 
\en
\end{lemma}

Before proving this lemma, let us mention that it is possible to verify directly that groups that are virtually a nonabelian surface groups cannot satisfy any of the three cases above.

\begin{proof} We will first show $(1) \Leftrightarrow (2)$, and then $\lnot (2) \Leftrightarrow \lnot (3)$.

$(1) \Rightarrow (2)$:  If $(1)$ holds we can write the short exact sequence \begin{equation} \label{eq:ext}1 \rightarrow \ker ~ \phi\rightarrow G \stackrel{\phi}{\rightarrow} \Z \rightarrow 1 \end{equation} with $\ker ~ \phi$ finitely generated. Hence $G$ is both an ascending and descending HNN extension.  It follows that the rational rays determined by both $\pm \phi \in H^1(G)$ are contained in $\Sigma^1(G)$.  

$(2) \Rightarrow (1)$:  Let us assume that $\Sigma^1(G)$ is nonempty. As  $\Sigma^1(G)$ is
 open, we can assume that there exists a primitive class $\phi \in H^{1}(G)$ whose projective class is determined by a rational ray in $\Sigma^1(G)$. 
K\"ahler groups cannot  be written as \textit{properly} ascending or descending extensions, i.e.\ ascending extensions are also descending and viceversa. (This was first proven in \cite{NR08}; see also \cite{FV16}.) But this is  to say that $G$ has the form of Equation
 (\ref{eq:ext}) with $\ker ~ \phi$ finitely generated.

To show the equivalence of $\lnot(2)$ and $\lnot(3)$, we start by recalling Delzant's description of the BNS invariant of a K\"ahler group $G$. Let $X$ be a K\"ahler manifold with $G = \pi_1(X)$. The collection of irrational pencils $f_{\a}\colon X \to \Sigma_{\a}$ such that the orbifold fundamental group $\Gamma_{\a} := \pi_1^{\orb}(\Sigma)$ is a cocompact Fuchsian group, is finite up to holomorphic automorphisms of the base (see \cite[Theorem 2]{De08}).  (In the language of orbifolds, these are the  holomorphic orbifold maps with connected fibers from $X$ to hyperbolic Riemann orbisurfaces.) 
The pencil maps give, in homotopy, a finite collection of epimorphisms with finitely generated kernel $f_{\a}\colon G \to \Gamma_{a}$. Then \cite[Th\'eor\`eme 1.1]{De10} asserts that the complement of $\S^1(G)$ in $S(G)$  (i.e.\ the set of so--called exceptional characters) is given by
\begin{equation} \label{eq:ex}  S(G) \setminus \S^1(G) =  \mbox{$\bigcup_{\a}$}[f^{*}_{\a} H^{1}(\Gamma_{\a};\R) - \{0\}],\end{equation} where we use the brackets $[ \cdot ]$ to denote the image of a subset of $(H^{1}(G;\R) \setminus \{0\})$ in $(H^{1}(G;\R) \setminus \{0\})/\R_{>0}$. (Note, instead, that genus $1$ pencils without multiple fibers do not induce exceptional characters.)

$\lnot (3) \Rightarrow \lnot (2)$: The negation of $(3)$ asserts that $X$ has either Albanese dimension zero, in which case $S(G)$ is empty, or it has an Albanese pencil $f\colon X \to \Sigma$ with $\G := \pi_{1}^{\orb}(\S)$ cocompact Fuchsian, in which case  $f^{*}H^{1}(\Gamma;\R) = H^1(G;\R)$. In either case $\S^1(G)$ is empty, i.e.\ $\lnot (2)$ holds.

$\lnot (2) \Rightarrow \lnot (3)$: If the set $\S^1(G) \subseteq S(G)$ is empty, either $S(G)$ is empty (i.e.\ $b_1(G) = 0$) whence $G$ has Albanese dimension zero, or by Equation (\ref{eq:ex}) there exists an irrational pencil $f\colon X \to \Sigma$, with $\G := \pi_{1}^{\orb}(\S)$ cocompact Fuchsian, inducing an isomorphism $f^{*}\colon H^{1}(\Gamma;\R) \to H^{1}(G;\R)$. (The union of finitely many \textit{proper} vector subspaces of $H^1(G;\R)$  cannot equal $H^1(G;\R)$.) 
Such a pencil is then the Albanese pencil of $X$.  \end{proof}

In our understanding, irregular K\"ahler manifolds whose Albanese dimension is smaller than their dimension are ``nongeneric", and their study should reduce, through a sort of dimensional reduction induced by the Albanese map, to the study of lower dimensional spaces (see e.g.\ \cite{Cat91}). In that sense, we think of groups that, together with their finite index subgroups, fail to satisfy the equivalent conditions (1) to (3) of Lemma \ref{lemma:equivalent} as nongeneric. 

Examples of K\"ahler groups with $a(G) > 1$ obviously abound.  It is less obvious to provide examples of K\"ahler groups  that have a jump in Albanese dimension,  i.e.\ $a(G) = 1$ but $va(G) > 1$. Before doing so,  we can make an observation about the geometric meaning of such occurrence.  Given an irrational pencil $f\colon X \to \Sigma$, its \textit{relative irregularity} is defined as \[ q_{f} = q(X) - q(\Sigma)\] (where the irregularity of $\S$ equals its genus). The Albanese pencil occurs exactly when $q_{f} = 0$. The notion of relative irregularity allows us to tie the notion of virtual Albanese dimension larger than one with the more familiar notion of virtual positive Betti number (or more properly, in K\"ahler context, virtual irregularity): a K\"ahler group with $a(G) = 1$ has $va(G) > 1$ if and only if there is a lift ${\tilde f}\colon {\widetilde X} \to {\widetilde \Sigma}$ of the Albanese pencil which is \textit{irregularly fibered}, i.e.\ $q_{\tilde f} > 0$.

A fairly simple class of examples comes from groups of type  $G = \pi_{g} \times K$ where $\pi_{g}$ is the fundamental group of a genus $g > 1$ surface and $K$ the fundamental group of a hyperbolic
 orbisurface of genus $0$, so that $b_1(K) = 0$. The group $K$ is K\"ahler 
(e.g.\ it is the fundamental group of an elliptic surface with enough multiple fibers and multiplicity), hence so is  $G$. As $H^1(G;\Z) = H^{1}(\pi_{g};\Z)$, the Albanese dimension of $G$ must be one. On the other hand, $K$ has a finite index subgroup that is the fundamental group of a genus $h > 1$ surface, hence $G$ is virtually $\pi_{g} \times \pi_{h}$. The algebraic surface $\S_g \times \S_g$ has Albanese dimension $2$, so by Proposition \ref{prop:samealb} the virtual Albanese dimension of $G$ is greater than one. (Group theoretically, this can be seen as consequence of \cite[Theorem 7.4]{BNS87}, which asserts that for cartesian products of groups with positive $b_1$ the BNS invariant is nonempty.) 

Less trivial examples with $a(G) = 1$ but $va(G) > 1$ come from bielliptic surfaces. These possess an Albanese pencil of genus $1$ without multiple fibers, but their fundamental groups are virtually $\Z^4$, hence have virtual Albanese dimension $2$ (see \cite[Section V.5]{BHPV04}). More sophisticated examples of K\"ahler surfaces (hence groups) with $a(G) = 1$ that are   finitely covered by the product of curves of genera bigger than one are discussed in \cite[Theorem F]{Cat00}. 

\section{Groups with Virtual Albanese Dimension One} \label{sec:va1}

In this section, we will discuss groups with $va(G) = 1$. Familiar examples of K\"ahler groups with $va(G) = 1$ are given by surface groups, and other simple examples arise as follows. We say that a group $G$ is \textit{commensurable} to a surface group if it admits a (normal) finite index subgroup $H \leq_{f} G$ that  admits an epimorphism $f \colon H \to \G$ to a surface group $\G$ with finite kernel, namely for which there exists an exact sequence \[ 1 \rightarrow F \rightarrow H \stackrel{f}{\rightarrow} \G \rightarrow  1\] with $\G$ a surface group and $F$ finite. 
The map $f \colon H_1(H) \to H_1(\G)$ is then an epimorphism with torsion kernel, i.e. if $G$ (hence $H$) is a K\"ahler group, \  $f \colon H \to \G$ represents in homotopy the Albanese map. 
Quite obviously, each finite index subgroup of $G$ will also be commensurable to a surface group, hence the virtual Albanese dimension of $G$  equals one. Note that when $G$ is commensurable to a surface group, it actually admits a (normal) finite index subgroup which is a surface group, namelly they are virtually surface groups: see \cite[Proposition 3.1]{Jo98} for a proof.

We want to analyze the picture so far in comparison with the situation for $3$--manifold groups. The class of irreducible $3$--manifolds that are not virtually fibered is limited (it is composed entirely by graph manifolds). One may contemplate that, similarly, the K\"ahler counterpart to that class contains only the obvious candidates, namely manifolds whose fundamental group is virtually a surface group. Proposition \ref{prop:product} below guarantees that this is not quite the case. The starting point is the existence of infinite K\"ahler groups (such as $Sp(2n,\Z)$, $n > 1$) with $vb_1(G)$, hence $va(G)$, equal to zero. These do not have a counterpart in dimension $3$.  

\begin{proposition} \label{prop:product} Let $\Gamma$ be the fundamental group of a genus $g > 0$ surface and let $K$ be an infinite K\"ahler groups with $va(K) = 0$; then $va(K \times \G) = a(K \times \Gamma) = 1$ and $K \times \G$ is not virtually a surface group. \end{proposition}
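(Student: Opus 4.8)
The plan is to establish the three assertions in turn, each largely a bookkeeping exercise with the Künneth/cohomology of a product plus the structural facts already assembled in Section~\ref{sec:ref}. First I would verify $a(K\times\Gamma)=1$. Since $vb_1(K)=0$ we have in particular $b_1(K)=0$, so $H^1(K\times\Gamma;\R)\cong H^1(\Gamma;\R)$ by the Künneth formula, and the cup product on $H^1$ of the product restricts to the (nondegenerate) cup product on $H^1(\Gamma;\R)$. Hence the genus $g(K\times\Gamma)$ equals $g(\Gamma)=g=q(\Gamma)=q(K\times\Gamma)$, and by the Castelnuovo--de Franchis/Catanese criterion recalled in the proof of Proposition~\ref{prop:samealb} this forces Albanese dimension exactly one (it is nonzero because $b_1>0$). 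Concretely, the projection $K\times\Gamma\to\Gamma$ is the Albanese pencil in homotopy.

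Next, $va(K\times\Gamma)=1$. I would argue by exhausting finite covers via finite regular covers, as licensed in Section~\ref{sec:ref}. A finite index subgroup of $K\times\Gamma$ contains a further finite index subgroup of the form $K'\times\Gamma'$ with $K'\leq_f K$, $\Gamma'\leq_f\Gamma$ (pass to the normal core and intersect with the two factors; alternatively one may note the relevant Albanese dimension is monotone under covers, so it suffices to bound it on a cofinal family). Since $va(K)=0$ we still have $b_1(K')=0$, so the very same computation gives $a(K'\times\Gamma')=1$. As $a$ is nondecreasing under passing to finite covers and is $\le 1$ on the cofinal family $\{K'\times\Gamma'\}$, it is $\le 1$ on all finite covers; combined with $a(K\times\Gamma)=1$ this yields $va(K\times\Gamma)=1$.

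Finally, $K\times\Gamma$ is not commensurable to a surface group. Suppose it were; then some $H\leq_f K\times\Gamma$ fits in $1\to F\to H\to\Lambda\to1$ with $F$ finite and $\Lambda$ a surface group. As above $H$ contains a finite index $K'\times\Gamma'$, and $F\cap(K'\times\Gamma')$ is a finite normal subgroup of $K'\times\Gamma'$; since $\Gamma'$ is a nontrivial surface group it is torsion-free and centerless, and $K'$ is infinite, so I would conclude $F\cap(K'\times\Gamma')$ is trivial (a finite normal subgroup of a product with a torsion-free factor that acts... more cleanly: replacing $H$ by $K'\times\Gamma'$, the quotient $K'\times\Gamma'\to\Lambda'\le_f\Lambda$ has finite kernel, so WLOG $F=1$ and $K'\times\Gamma'$ is itself a surface group $\Lambda'$). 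But a surface group is freely indecomposable and, being a one-relator group / a $PD_2$ group, cannot split as a direct product of two infinite groups: its center is trivial whereas $K'\times\Gamma'$ has $Z(K')$ as a direct factor, and if $Z(K')=1$ one instead invokes that a nontrivial surface group is not a nontrivial direct product (e.g.\ by the Euler characteristic: $\chi(\Lambda')=\chi(K')\chi(\Gamma')$ with $\chi(\Gamma')<0$, forcing $\chi(K')=0$, but then $K'$ is virtually $\Z^2$ or a surface group with $b_1>0$, contradicting $vb_1(K)=0$). This Euler-characteristic line is in fact the cleanest: $\chi(K'\times\Gamma')=\chi(K')\,\chi(\Gamma')$, and for it to be that of a hyperbolic surface group ($<0$) with $K'$ infinite and $b_1(K')=0$ we would need $\chi(K')\ne 0$, i.e.\ $K'$ has nonzero Euler characteristic while being a K\"ahler group with $b_1=0$ — and then $H^1$ of $K'\times\Gamma'$ is $H^1(\Gamma')$, whose cup product is degenerate on the surface group $K'\times\Gamma'$ only if... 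I would package the contradiction simply as: a surface group of genus $\ge 2$ has $b_1\ge 4$ and its first cohomology carries a nondegenerate symplectic cup form, whereas on $K'\times\Gamma'$ the restriction of the cup product to $H^1\cong H^1(\Gamma')$ is nondegenerate of rank $2g(\Gamma')=b_1(K'\times\Gamma')$, so $K'\times\Gamma'$ has genus equal to its irregularity — consistent — the real obstruction is that a surface group cannot have an infinite normal subgroup of infinite index with the quotient acting trivially on anything; most robustly, a surface group splits as a nontrivial direct product never (it is either $\Z^2$, handled since $K'$ would be finite, or hyperbolic hence with trivial center and not a nontrivial product).

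\medskip

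\noindent\textbf{Anticipated main obstacle.} The genuinely delicate point is the last one: cleanly ruling out that $K\times\Gamma$ is commensurable to a surface group. The cover-to-$K'\times\Gamma'$ reduction and the disposal of the finite kernel $F$ are routine, but the core fact ``an infinite $\times$ nontrivial-surface group is never a surface group'' should be nailed down by a single clean invariant. I expect the cleanest argument to be via $vb_1$: if $K'\times\Gamma'\cong\Lambda'$ a surface group, then, passing to a further finite index subgroup, $\Lambda'$ has arbitrarily large $b_1$ under finite covers (surface groups have $vb_1=\infty$) and this growth must come from the $K'$ factor since $H_1$ of any finite cover $K''\times\Gamma''$ of $K'\times\Gamma'$ splits as $H_1(K'')\oplus H_1(\Gamma'')$ by Künneth — but $b_1(K'')=0$ for all finite covers $K''\leq_f K$ because $vb_1(K)=0$, so $b_1(K''\times\Gamma'')=b_1(\Gamma'')\le b_1$ of the corresponding finite cover of $\Gamma$, which is bounded by the index, forcing $\Lambda'$ to have bounded $vb_1$ — impossible for an infinite surface group. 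That contradiction is crisp and uses only $vb_1(K)=0$, Künneth, and $vb_1(\text{surface group})=\infty$, so I would organize the proof around it.
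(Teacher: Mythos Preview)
Your arguments for $a(K\times\Gamma)=1$ and $va(K\times\Gamma)=1$ are correct. For the second you use a cofinality trick (every finite index subgroup contains some $K'\times\Gamma'$) in place of the paper's direct application of diagram~(\ref{eq:diag}); both are valid.

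The third part has a genuine gap. The reduction to ``WLOG $F=1$ and $K'\times\Gamma'$ is itself a surface group $\Lambda'$'' glosses over the finite kernel $F'=F\cap(K'\times\Gamma')$; this is patchable (since $\Gamma'$ is torsion-free one gets $F'\subseteq K'\times\{1\}$ and can pass to $(K'/F')\times\Gamma'$). More seriously, the $vb_1$ argument you settle on does not yield a contradiction when $g(\Gamma)\geq 2$: then $vb_1(\Gamma')=\infty$, so $b_1(K''\times\Gamma'')=b_1(\Gamma'')$ is unbounded, and ``bounded by the index'' bounds nothing once the index varies. Both $K'\times\Gamma'$ and $\Lambda'$ have $vb_1=\infty$, so no contradiction arises this way. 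A correct variant of your idea: take a proper finite-index $K''\lneq K'$ (this exists because $K'$, sitting inside the surface group $\Lambda'$, is residually finite) and note $b_1(K''\times\Gamma')=b_1(\Gamma')=b_1(\Lambda')$, whereas as a proper finite-index subgroup of a genus $\geq 2$ surface group it must have strictly larger $b_1$. Even cleaner: $K'/F'$ is a nontrivial finitely generated subgroup of a surface group, hence free or a surface group, hence has $b_1>0$, contradicting $vb_1(K)=0$.

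The paper's argument for non-commensurability is quite different and avoids the reduction to product subgroups altogether. It observes that both the hypothesized map $H\to\pi$ (kernel $F$ finite) and the map $\tilde f\colon H\to\Lambda$ from diagram~(\ref{eq:diag}) (kernel $\Delta\leq_f K$ infinite) have torsion kernel on $H_1$, hence both represent the Albanese map in homotopy. Uniqueness of the Albanese pencil then forces the two maps to coincide up to an automorphism of the target, contradicting the disparity in kernel sizes. This is shorter and reuses the machinery of Section~\ref{sec:ref}; your route, once repaired, is more hands-on but needs outside input on subgroups of surface groups.
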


\begin{proof} The projection map $f\colon K \times \G \to \G$ is the Albanese map, hence $a(K \times \Gamma) = 1$.  We claim that as $vb_{1}(K) = 0$, the virtual Albanese dimension of $K \times \G$ is one. In fact, for any normal subgroup $H \leq_{f} K \times \G$ we have from (\ref{eq:diag}) \[ \xymatrix@=9pt{ 
 1\ar[rr] & & \Delta \ar[rr]\ar[dd]\ar[dr] & & H \ar'[d][dd]\ar[dr]  \ar[rr]^{\widetilde f} & & \Lambda \ar[rr]\ar[dr]\ar'[d][dd] & & 1
\\ & & & H_1(\Delta)  \ar[rr] & & H_1(H) \ar[rr] &  & H_{1}(\Lambda )
\\ 1 \ar[rr] &  & K \ar[rr] & & K \times \G  \ar[rr]^{f} & & \G \ar[rr] & & 1.
  } \] As $\Delta \leq_{f} K$ and $vb_{1}(K) = 0$, $H_1(\Delta)$ is torsion, hence  $\mbox{Ker}(H_1(H) \to H_1(\Lambda)) = \mbox{Im}(H_1(\Delta) \to H_{1}(H))$ is torsion as well. It follows that the map ${\widetilde f} \colon H \to \Lambda$ is, in homotopy, the Albanese map, hence $a(H) = 1$. 

There are many ways to show that $K \times \G$ does not have a finite--index subgroup which is a surface group; for instance a quick proof can be obtained using standard properties of $L^2$--Betti numbers.
\end{proof}

 The examples of Proposition \ref{prop:product}  guarante that the class of K\"ahler groups that do not virtually admit an epimorphism to $\Z$ with finitely generated kernel is more variegated  than its counterpart in the $3$--manifold world.  We should, however, qualify
 this result.  These examples build on the existence of infinite K\"ahler groups with $vb_1 = 0$, and leverage on the fact that we can take products of finitely presented groups, as the class of K\"ahler manifolds is closed under cartesian product. Neither of these phenomena has a counterpart in the realm of $3$--manifolds. It is perhaps not too greedy to ask for examples of K\"ahler groups with $va(G) = 1$ in a realm where simple constructions as the one of Proposition \ref{prop:product} are tuned out. 

As we are about to see, this occurs in the case of aspherical surfaces. That's a level playing field with irreducible $3$--manifolds with $b_1 > 0$, which are as well aspherical.

\begin{question} \label{qu:bq} Does there exist a group $G$  with $b_1(G) > 0$ that is the fundamental group of  an aspherical K\"ahler surface and does not virtually algebraically fiber?
 \end{question}

The reason why such an example would be appealing comes from the fact that, perhaps going to a finite index subgroup, the fundamental group $G$ of an aspherical K\"ahler surface  with $va(G) = 1$ is a $4$--dimensional Poincar\'e duality group whose Albanese pencil $f\colon X \to \S$ determines a short exact sequence of finitely generated groups
\begin{equation} 1 \rightarrow K \rightarrow G \rightarrow \G \rightarrow  1, \end{equation} with $\G$ a surface group and $K$ finitely generated. As remarked by Kapovich in \cite{Ka98}, it is a theorem of Hillman that either $K$ is itself a (nontrivial) surface group, or it is {\em not} finitely presented (see e.g.\ \cite[Theorem 1.19]{Hil02}). In either case, a construction like the one in Proposition \ref{prop:product} (or a twist thereof) is excluded. Constructions of this type may not be easy to find. For instance, Stover gives in  (\cite[Theorem 2]{Sto15}) an example of a K\"ahler group, a cocompact arithmetic lattice in $PU(2,1)$, which algebraically fibers. The same is true for the Cartwright--Steger surface (\cite{CS10}), another ball quotient with  $b_1(G) = 2$  and whose Albanese map (as shown in \cite[Corollary 5.3]{CKY15}) has no multiple fiber: by the discussion in the proof of Lemma \ref{lemma:equivalent}  $G$ itself has BNS invariant $\S^1(G)$ equal to the entire $S(G)$ and all epimorphisms to $\Z$ have finitely generated kernel. (According to the introduction to \cite{Sto15}, this  fact was known also to Stover and collaborators.) 

We want now  to show how Question \ref{qu:bq} ties with the study of coherence of fundamental groups of aspherical K\"ahler surfaces, which was initiated in \cite{Ka98,Ka13} using the aforementioned result of Hillman, and further pursued in \cite{Py16}. The outcome of these articles is that, with the obvious exceptions, most aspherical K\"ahler surfaces can be shown to have non--coherent fundamental group. (See \cite[Theorem 4]{Py16} for a detailed statement, which uses a notation slightly different from ours.) Drawing in part from the same circle of ideas of these references (as well as a minor extension of the work in \cite{Ko99}) we can prove the following result, which improves on the existing results insofar as it further narrows possible coherent fundamental groups to finite index subgroups of the fundamental group of Kodaira fibrations with virtual Albanese dimension one. (A pencil on a K\"ahler surface is called a Kodaira fibration if it is smooth and not isotrivial. Its fibers and base have genera at least $3$ and $2$ respectively, see \cite[Section V.14]{BHPV04}.) The new tool yielding the additional noncoherence results is the existence of algebraic fibrations on $G$.

\begin{theorem} \label{thm:koda} Let $G$ be a group with $b_1(G) > 0$  which is the fundamental group of  an aspherical K\"ahler surface $X$; then $G$ is not coherent, except for the case where it is virtually the product of $\Z^2$ by a surface group, or perhaps for the case where $X$ is finitely covered by a Kodaira fibration of virtual Albanese dimension one. \end{theorem}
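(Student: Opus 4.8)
\textbf{The plan} is to combine the alternative of Theorem~\ref{thr:main} with Hillman's theorems on $\mathrm{PD}_4$ groups that split over $\mathbb{Z}$ or over a surface group, and then to read off the geometry of $X$ from the Enriques--Kodaira classification. Coherence is inherited by finite-index subgroups, and conversely a finite extension of a finitely presented group is finitely presented, so coherence is a commensurability invariant and I may pass freely to finite-index subgroups; recall that $G$ is a $\mathrm{PD}_4$ group, since $X$ is an aspherical closed oriented $4$-manifold. Because $b_1(G)>0$ forces $va(G)\ge 1$, Theorem~\ref{thr:main} leaves two cases: either $G$ virtually algebraically fibers, or $va(G)=1$. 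Assume from now on that $G$ is coherent; the goal is to place $G$ in one of the two exceptional families. In both cases, after passing to a finite-index subgroup $H$ (the fundamental group of an aspherical K\"ahler surface $\widetilde X$, still $\mathrm{PD}_4$ and coherent), one obtains a short exact sequence $1\to K\to H\to Q\to 1$ with $K$ finitely generated: in the first case $Q=\mathbb{Z}$ by definition of algebraic fibering; in the second, $Q=\pi_1(\widetilde\Sigma)=\pi_g$ is the surface group of the base of the Albanese pencil of $\widetilde X$ (after a further finite cover removing its multiple fibers --- here $g=q(\widetilde X)\ge 1$, and since $va(H)=1$ the pencil stays Albanese under all further covers), with $K$ the finitely generated kernel of $f_*$ coming from the sequence recalled in Section~\ref{sec:ref}. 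Coherence makes $K$ finitely presented, so \cite[Theorem~1.19]{Hil02} (Kapovich's observation) applies: $K$ is a $\mathrm{PD}_3$ group in the first case, and a surface group --- genuinely a $\mathrm{PD}_2$ group, since $H$ is $\mathrm{PD}_4$ --- in the second.

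\textbf{Case $Q=\mathbb{Z}$.} Then $\chi(\widetilde X)=\chi(H)=0$, because $K$ is an odd-dimensional Poincar\'e duality group and the Euler characteristic is multiplicative for extensions by $\mathbb{Z}$; hence $c_2(\widetilde X)=0$. By the Enriques--Kodaira classification --- here a minor extension of \cite{Ko99} is used --- an aspherical compact K\"ahler surface with $c_2=0$ is a complex torus, a bielliptic surface, or an elliptic fiber bundle over a curve-orbifold with infinite orbifold fundamental group; in every case a finite cover is a smooth $T^2$-bundle over a positive-genus curve, which is isotrivial (its $j$-invariant is holomorphic, hence constant, on the compact base) and so has finite monodromy. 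Therefore $H$ is virtually $\mathbb{Z}^2\times(\text{surface group})$: exception (a).

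\textbf{Case $Q=\pi_g$ with $K$ a surface group.} Now $H$ is surface-by-surface, and a comparison of Euler characteristics shows that the Albanese pencil $\widetilde X\to\widetilde\Sigma$ is smooth: it has no multiple fibers ($Q$ being an honest surface group), its generic fiber surjects onto $K$ hence has genus $\ge h$, and $\chi(\widetilde X)=\chi(H)=(2-2h)(2-2g)$ compared with $\chi(\widetilde X)=(2-2g)(2-2h')+\sum_s\delta_s$, $\delta_s\ge 0$, leaves only the smooth option. So $\widetilde X$ is virtually a smooth surface bundle over a surface. If the fiber genus is $1$, isotriviality over the compact base gives $H$ virtually $\mathbb{Z}^2\times\pi_g$, exception (a). If the base genus is $\le 1$, an Arakelov-type inequality forbids a smooth non-isotrivial family of curves of genus $\ge 2$ over it, so the bundle is isotrivial and $H$ is virtually a product of two surface groups; if one factor is $\mathbb{Z}^2$ this is exception (a), while if both have genus $\ge 2$ then $\pi_h\times\pi_g$ contains $F_2\times F_2$, hence the Stallings--Bieri subgroup $\ker(F_2\times F_2\to\mathbb{Z})$, which is finitely generated but not finitely presented --- contradicting coherence, so this subcase cannot arise. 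The sole remaining possibility is fiber genus $\ge 2$, base genus $\ge 2$, non-isotrivial: $\widetilde X$ is a Kodaira fibration, and since we are in the branch $va(G)=1$ this is precisely exception (b). All cases are accounted for.

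\textbf{Main obstacle.} I expect the hardest part to be the step in the last case turning the algebraic statement ``$H$ is surface-by-surface'' into the geometric statement ``$\widetilde X$ is virtually a smooth surface bundle, isotrivial or a Kodaira fibration'': the Euler characteristic comparison controls the fibers of the \emph{Albanese} pencil, but one must verify that this is the pertinent fibration and choose the successive finite covers compatibly, which relies on Catanese's theory of fibered K\"ahler surfaces. A secondary delicate point is confirming that Hillman's theorem is available with merely ``$K$ finitely presented'' as hypothesis, and that, once asphericity is imposed, the $c_2=0$ classification leaves nothing outside the $\mathbb{Z}^2\times(\text{surface group})$ picture.
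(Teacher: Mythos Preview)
Your argument is correct and tracks the paper's proof closely: the same dichotomy on $va(G)$, the same appeal to Hillman to force either $\chi=0$ or $K$ a surface group, and the same passage through the classification and Zeuthen--Segre to reach smoothness of the Albanese pencil. Two small points of comparison. First, regarding your ``secondary delicate point'': in the $Q=\mathbb{Z}$ branch the paper sidesteps the question of whether $K$ is genuinely $PD_3$ by invoking \cite[Theorem~4.5(4)]{Hil02} directly, which states that $\ker\phi$ is $FP_2$ if and only if $\chi(\widetilde X)=0$; so ``finitely presented'' (hence $FP_2$) yields $\chi=0$ immediately, without the $PD_3$ conclusion. Second, in the $va(G)=1$ branch your sub-case analysis on fiber and base genus, together with the $F_2\times F_2$/Stallings--Bieri argument, is superfluous: the paper simply observes that if the smooth Albanese pencil were isotrivial, a finite cover of $X$ would be a product of curves and hence have Albanese dimension two, contradicting $va(G)=1$. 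Thus the pencil is automatically non-isotrivial, i.e.\ a Kodaira fibration, and every sub-case you treat in that branch other than exception~(b) is in fact vacuous.
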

\begin{proof} 
If $G$ has $va(G) > 1$, let $H \leq_{f} G$ be a subgroup, corresponding to a finite $n$--cover ${\widetilde X}$ of $X$,  which algebraically fibers.  Let \begin{equation} 1 \rightarrow \ker ~ \phi\rightarrow H \stackrel{\phi}{\rightarrow} \Z \rightarrow 1 \end{equation} with $\ker ~ \phi$ finitely generated represent an algebraic fibration.
By \cite[Theorem 4.5(4)]{Hil02} the finitely generated group $\ker ~ \phi$ has type $FP_2$ if and only if the Euler characteristic $e({\widetilde X}) = n e(X) = 0$. A finitely presented group has type $FP_2$. It follows that $\ker ~ \phi \leq G$ is finitely generated but not finitely presented, hence $G$ is not coherent, unless $e(X) = 0$.  If $e(X) = 0$ the classification of compact complex surfaces (see e.g.\ \cite[Table 10]{BHPV04}) entails that $X$ admits an  irrational pencil with elliptic fibers. As $e(X) = 0$ the Zeuthen--Segre formula (see e.g.\ \cite[Proposition III.11.4]{BHPV04}) implies that the only singular fibers can be multiple covers of an elliptic fiber, hence $X$ is finitely covered by a torus bundle.  An holomorphic fibration with smooth fibers of genus $1$ is also isotrivial (i.e. all  fibers are isomorphic), namely a holomorphic fiber bundle, see [BHPV04, Section V.14]. We can invoke then [BHPV04, Sections V.5 and V.6] to deduce that some finite cover of X is a product $T^2 \times \Sigma_g$. 
In this case, the fundamental group is virtually $\Z^2 \times \G$, with $\G$ a surface group. By Theorem B of \cite{BHMS02} any finitely generated subgroup $L \leq \Z^2 \times \G$ has a finite index subgroup that is the product of finitely generated subgroups of each factor. As surface groups are coherent, $L$ must be finitely presented, hence in this case the fundamental group of $X$ is coherent.

Non--obvious examples of coherent groups satisfy therefore $va(G) = 1$, but we can further narrow down this occurence: If $G$ has $va(G) = 1$, perhaps going to a finite index subgroup, it is a $4$--dimensional Poincar\'e duality group whose Albanese pencil  $f\colon X \to \S$ determines as usual the short exact sequence of finitely generated groups $1 \rightarrow K \rightarrow G \rightarrow \G \rightarrow  1$, with $\G$ a surface group and $K$ finitely generated.  To deal with this case, we relay on the strategy of \cite{Ka98,Ka13}: by the aforementioned theorem of Hillman \cite[Theorem 1.19]{Hil02} either $K$ is not finitely presented (and $G$ is not coherent) or it is itself a nontrivial surface group.  
In the latter case, we can follow the path of \cite{Ko99} (see also \cite{Hil00}) to complete the proof of the statement.
We first observe that the surface $X$, being aspherical, is homotopy equivalent to a smooth $4$--manifold $M^{4}$, a surface bundle over a surface $F \hookrightarrow M \to \S$, where $\pi_{1}(F) = K$, $\pi_{1}(\S) = \G$. Note that $M$ is uniquely determined by the short exact sequence of its fundamental group, see \cite[Theorem 5.2]{Hil02}.  Next, as $X$ and $M$ are homotopy equivalent, they have the same Euler characteristic $e(X) = e(M) = e(F) \cdot e(\S)$. At this point, the Zeuthen--Segre formula entails that the only nonsmooth fibers of the Albanese pencil could be multiple covers of an elliptic fiber (in particular, $g(F) = 1$). The assumption that $K$ is finitely generated excludes the presence of multiple fibers (\cite[Lemma 4.2]{Cat03}). This means that Albanese pencil $f\colon X \to \S$ is smooth (i.e. a  holomorphic fibration of maximal rank), namely $X$ is actually a surface bundle over a surface, in particular it is diffeomorphic to $M$. If the pencil was isotrivial (i.e.\ all fibers isomorphic), it would be a holomorphic fiber bundle, and we would conclude as above that  some finite cover of $X$ is a product, whence $va(X) = 2$ and $va(G) > 1$. The statement follows.
\end{proof}

In summary, the existence of non-obvious examples of coherent fundamental groups of aspherical 
K\"ahler surfaces hinges on an affirmative answer to Question \ref{qu:bq}.

\begin{remarks} 
\begin{enumerate}
\item Note that the proof of the first case of Theorem \ref{thm:koda} applies \textit{verbatim} also in the case of aspherical surfaces with $a(G) = 1$ that algebraically fiber; in particular, this entails that the Cartwright--Steger surface and the surfaces described in 
\cite[Theorem 1.2]{DiCS17} have non--coherent fundamental groups. Those groups are 
torsion--free lattices $G \leq PU(2,1)$, and the surfaces are ball quotient $B^{2}_{\C}/G$, i.e. complex hyperbolic surfaces. This was implicitly known (for slightly different reasons) also from \cite{Ka98}; we point out that for the argument above we don't need to invoke \cite{Liu96}.
\item It is not difficult to prove the existence of Kodaira fibrations of Albanese dimension one (which, by the Hochschild--Serre spectral sequence, are surface bundles whose coinvariant homology of the fiber $H_1(K;\Z)_{\G}$ has rank zero): in fact, the ``generic" Kodaira fibration arising from a holomorphic curve in a moduli space of curves has Albanese dimension one. However, this seems to have no obvious consequences for the discussion above: we are not aware of the existence of Kodaira fibrations of virtual Albanese dimension one. We can add that, even if such surfaces did exist, we cannot decide if their fundamental groups are coherent. For instance, it is not obvious whether they may contain $F_2 \times F_2$ as subgroup. 
\item  The first examples of Kodaira fibrations, due to Kodaira and Atiyah, actually carry two  inequivalent structures of Kodaira fibrations, hence are guaranteed to have Albanese dimension two. By the above, their fundamental group is not coherent. The same result applies for the doubly fibered Kodaira fibrations constructed in \cite{CR09,LLR17}. Note that all doubly fibered surfaces bundles with positive signature are of type III in Johnson's trichotomy (i.e. the monodromy representation $\rho : \pi_1(\S) \to Mod(F)$ is injective, and this holds for \textit{all} fibrations $F \hookrightarrow X \to \Sigma$), see \cite{Jo94}. It follows that the K\"ahler group $G = \pi_1(X)$ injects as subgroup of the corresponding mapping class group for the once--punctured fiber  $Mod(F^1)$. The smallest fiber genus attained equals $g(F) = 4$, which is optimal, see \cite[Table 3]{LLR17}.
\end{enumerate} \end{remarks}

It is interesting to flesh out one consequence of the proof of Theorem \ref{thm:koda} that gives some information on higher BNS--type invariants of some K\"ahler groups. Precisely, we will consider the homotopical BNSR invariant $\S^2(G) \subseteq \S^1(G) \subseteq S(G)$ introduced in \cite{BR88}. We will not need the definition of these invariant and we will limit ourselves to mention the well--known fact (see e.g. \cite[Section 1.3]{BGK10}) that, using the notation preceding Lemma \ref{lemma:equivalent},  given a primitive class $\phi \in H^1(G)$, the kernel $\ker ~ \phi \leq G$ is of type $F_2$ (namely, finitely presented) if and only if the rational rays determined by both $\pm \phi \in H^1(G)$ are contained in $\Sigma^2(G)$. 

While we don't know a way to get complete information on the full invariant $\Sigma^2(G)$, the ingredients of the proof of Theorem \ref{thm:koda} is sufficient to entail the following lemma, that \textit{per se} refines the previous result of non--coherence, and is possibly one of the first results on higher invariants of 
K\"ahler groups, besides the case of direct products:

\begin{proposition} \label{proposition:bnsr} The fundamental group $G$ of an aspherical K\"ahler surface $X$ of strictly positive  Euler characteristic with Albanese dimension two has BNSR invariants satisfying the inclusions \[  \S^2(G) \subsetneq \S^1(G) \subseteq S(G). \] \end{proposition}

\begin{proof} The point of this statement is that the first inclusion is strict. For sake of clarity, we review the argument we used in the proof of Theorem \ref{thm:koda}: the condition on the Albanese dimension implies that $G$ algebraically fibers, for some primitive class $\phi \in H^1(G)$. As the Euler characteristic of $X$ is strictly positive, Hillman's Theorem (\cite[Theorem 4.5(4)]{Hil02}) entails that $\ker ~ \phi$ is not $FP_2$, nor \textit{a fortiori} finitely presented. 
\end{proof}

 Note that the same conclusion of the lemma holds, even when $a(G) = 1$, as long as the fundamental group algebraically fibers, e.g. for the Cartwright--Steger surface. Perhaps more importantly, the corollary applies to the aforementioned Kodaira fibrations defined by Kodaira and Atiyah. Topologically, these are surface bundles over a surface, so that their fundamental groups are nontrivial extension of a surface group by a surface group. Higher BNSR invariants of direct products of surfaces groups are (to an extent) well understood by purely group theoretical reasons. Instead, Proposition \ref{proposition:bnsr} seems to be the first result of that type for nontrivial extensions.

\begin{remark} The reader familiar with BNSR invariants may notice that we are just shy of  being able to conclude that the fundamental group of aspherical K\"ahler surfaces of positive  Euler characteristic has empty $\S^2(G)$. We conjecture that this is true. (The conjecture holds true whenever $\S^2(G) = - \S^2(G)$.)  We mention also that the statement of Proposition \ref{proposition:bnsr} remains true if we consider the homological BNSR invariant $\S^2(G;\Z)$ of  \cite{BR88}.
 \end{remark}

We will discuss now a result that further ties groups with virtual Albanese dimension one and surface groups,  asserting that the  Green--Lazarsfeld sets of such groups coincide (up to going to a finite index subgroup) with those of their Albanese image. 

The Green--Lazarsfeld sets of a K\"ahler manifold $X$ (and, by extension, of its fundamental group $G$) are subsets of the \textit{character variety} of $G$,  the complex algebraic group defined as ${\widehat G} := H^{1}(G;\C^{*})$ . The Green--Lazarsfeld sets $W_{i}(G)$ are defined as the collection of cohomology jumping loci of the character variety, namely \[ {W}_{i}(G) = \{ \xi \in {\widehat G} ~ | \rk H^1(G;\C_{\xi}) \geq i \},\] nested by the \textit{depth} $i$: ${W}_{i}(G) \subseteq {W}_{i-1}(G) \subseteq \ldots \subseteq {W}_{0}(G) = {\widehat G}$. 

For K\"ahler groups the structure of ${W}_{1}(G)$ is well--understood. The projective case appeared in \cite{Si93} (that refined previous results of \cite{GL87,GL91}); this result was then extended to the K\"ahler case in \cite{Cam01} (see also \cite{De08}). 
Briefly, $W_1(G)$ is the union of a finite set of isolated torsion characters and the inverse image of the Green--Lazarsfeld set of   hyperbolic orbisurfaces under the finite collection of pencils of $X$ with hyperbolic base. 

If $X$ has Albanese dimension one, the Albanese map $f\colon G \to \G$ 
induces an epimorphism $f_{*}\colon  H_{1}(G) \to H_{1}(\G)$. Therefore, we have an induced  isomorphism of the connected components of the character varieties containing the trivial character
\begin{equation} \label{eq:char}  {\widehat f}\colon {\widehat \G}_{\hat 1} \stackrel{\cong}{\longrightarrow} {\widehat G}_{\hat 1} \end{equation}
where for a group $G$, we denote the connected component of the character variety containing the trivial character ${\hat 1} \colon G \to \C^*$ as $ {\widehat G}_{\hat 1}$.

The next theorem, a restatement of Theorem \ref{thm:gli}, shows that if $X$ has virtual Albanese dimension one, then after perhaps going to a cover, the map ${\widehat f}$ restricts to an isomorphism of the Green--Lazarsfeld sets.

\begin{theorem} \label{thm:gls} Let $X$ be a K\"ahler manifold with $va(X) = 1$.
 Up to going to a finite index normal subgroup if necessary,  the Albanese map $f\colon G \to \G$ induces an isomorphism
  \[  {\widehat f}: {W}_{i}({\G} ) \stackrel{\cong}{\longrightarrow}  {W}_{i}(G) \] of the Green--Lazarsfeld sets. 
  \end{theorem} 

\begin{proof} Up to going to a finite cover, we can assume that $X$ admits an Albanese pencil $f \colon X \to \S$. 
 Moreover, as every cocompact Fuchsian group of positive genus admits a finite index normal subgroup which is a honest surface group, we can also assume that, after going to a further finite cover if necessary, the Albanese pencil doesn't contain any multiple fibers. In particular, $H_1(\S)$ will be torsion--free. Without loss of generality, by going to the normal core of the associated finite index subgroup, we can always assume that the cover  is regular. Summing up, after possibly going to a finite cover the Albanese map, in homotopy, is an epimorphism $f \colon G \to \G$ where $\G = \pi_1(\S)$ is a genus $g(\G)$ surface group.

 The Green--Lazarsfeld sets $W_{i}(\Gamma)$ for a surface group are determined in \cite{Hir97}, and are  given by
\begin{equation} \label{eq:glsets} W_{i}(\G)\,\, =\,\, \left\{ \begin{array}{lll} {\widehat \G} 
    & \mbox{if $1 \leq i\leq 2g(\G) - 2$,} \\  {\hat 1} & \mbox{if $ 2g(\G) - 1 \leq i \leq  2g(\G)$},  \\  \emptyset & \mbox{if $i \geq  2g(\Gamma) + 1$}. \end{array} \right. 
\end{equation}

Given $\rho \in {\widehat \Gamma}$, surjectivity of $f\colon G \to \G$ implies by general arguments (see  e.g.\ \cite[Proposition 3.1.3]{Hir97}) that $f^{*}\colon H^{1}(\G;\C_{\rho}) \to H^{1}(G;\C_{f^{*}(\rho)})$ is a monomorphism. But we will actually need more, namely that by \cite[Theorem 1.1]{Br02} or \cite[Theorem 1.8]{Br03} $f^{*}$  is
 an isomorphism, except perhaps when $\rho \in {\widehat \G}$ is a torsion character. 

This implies that ${\widehat f}\colon {W}_{i}({\G} ) \to  {W}_{i}(G)$ is an injective map, and it will fail to preserve the depth (i.e.\ dimension of the twisted homology) only for torsion characters.  

 Consider the short exact sequence of groups
  \[ 1 \longrightarrow {\widehat G}_{\hat 1} \longrightarrow {\widehat G} \stackrel{t}{\longrightarrow} \Hom(\tor H_1(G);\C^{*}) \longrightarrow 1, \] where  ${\widehat G}_{\hat 1}$ refers as above to the component of ${\widehat G}$ connected to the trivial character ${\hat 1}$. 

By \cite[Theorem 0.1]{GL91} all irreducible positive dimensional components of  ${W}_{i}(G)$ are  inverse images of the Green--Lazarsfeld set of the  hyperbolic orbisurfaces. By Proposition \ref{prop:unique}, the Albanese pencil is unique, hence ${\widehat f}({W}_{i}({\G} )) =  {\widehat G}_{\hat 1} \subseteq W_{i}(G)$ (for $1 \leq i \leq 2q(G)-2$) is the only positive dimensional component, that will occur if (and only if ) $q(G) = g(\G) \geq 2$.

We now have the following claim.

\begin{claim}
For all $i \geq 1$, $W_i(G) \setminus {\widehat f}({W}_{i}({\G} ))$ is composed of torsion characters. 
\end{claim}

If $q(G) = 1$, this follows immediately from  \cite[Th\'eor\`eme 1.3]{Cam01}, as in this case $W_1(\G)$  is torsion. If $q(G) \geq 2$, we need a  bit more work: again by \cite[Th\'eor\`eme 1.3]{Cam01} and the above, we have that
\[ W_1(G) =  {\widehat G}_{\hat 1}  \mbox{ $\bigcup$ }  Z \] where $Z$ is a finite collection of torsion characters, that we will assume to be disjoint from ${\widehat G}_{\hat 1}$. By definition $W_{i}(G)\subseteq W_1(G)$. This implies,  by the aforementioned result of Brudnyi, that:  

-- if $1 \leq i \leq  2q(G)-2$ the isolated points of $W_i(G)$  are contained in $Z$; 

-- if $i \geq 2q(G) -1$, they are either contained in $Z$,  or are torsion characters in  ${\widehat G}_{\hat 1}$.
In either case,  $W_i(G) \setminus {\widehat f}({W}_{i}({\G} ))$ is composed of torsion characters as claimed. This concludes the proof of the claim.

All this, so far, is a consequence of the fact that $X$ has Albanese dimension one. Now we will make use of the assumption on the virtual Albanese dimension to show that $W_i(G) \setminus {\widehat f}({W}_{i}({\G} ))$ is actually empty.

In order to prove this, recall the formula for the first Betti number for finite regular abelian covers of $X$, as determined in \cite{Hir97}: Given an epimorphism $\a\colon G \to S$ to a finite abelian group, and following the notation from the diagram in (\ref{eq:diag}), the finite cover $H$ of $X$ determined by $\a$ has first Betti number  \begin{equation} \label{eq:b1h}  b_1(H) = \sum_{i \geq 1} |W_{i}(G) \cap {\widehat \a}({\widehat S})|.\end{equation} This formula says that a character $\xi\colon G \to \C^{*}$ such that $\xi \in W_i(G)$ contributes with multiplicity equal to its depth to the Betti number of the cover defined by $\a\colon G \to S$ whenever it factorizes via $\a$. Similarly, the corresponding cover $\Lambda$ of $\Sigma$ has first Betti number  
  \begin{equation} \label{eq:b1l} b_1(\Lambda) = \sum_{i \geq 1} |W_{i}(\G) \cap {\widehat \b}({\widehat{S/\a(K)}})|.\end{equation}
   Consider a character $\rho \in W_i(\G) \cap {\widehat \b}({\widehat{S/\a(K)}})$.  We have a commutative diagram
\[ \xymatrix@C1.2cm@R0.65cm{ G\ar[d]_\a \ar[r]^f & \G \ar[d]_\b \ar[ddr]^\rho \\
S \ar[r] & S/\a(K) \ar[dr]  \\
& & \C^{*}  }\]
whence $f^*(\rho)$ factorizes via $\a$.  As ${\widehat f}(W_i(\G)) \subseteq W_{i}(G)$ this implies that $f^{*}(\rho) \in  W_{i}(G) \cap {\widehat \a}({\widehat S})$. We deduce that for any $\a: G \to S$ any contribution to the right hand side of Equation   (\ref{eq:b1l}) is matched by an equal contribution to the right-hand-side of Equation   (\ref{eq:b1h}).

Now assume that, for some $i \geq 1$, there is a {\em nontrivial} character $\xi\colon G \to \C^{*}$ such that $\xi \in W_i(G) \setminus {\widehat f}({W}_{i}({\G} ))$. (The case of the {\em trivial} character ${\hat 1}\colon G \to \C^*$ is
 dealt with in the same way; we omit the details to avoid repetition.)  As shown in the claim  above, such character must be torsion. Because of that, its image is a finite abelian subgroup $S \leq \C^{*}$, i.e.\ $\xi$  factors through an epimorphism $\a_{\xi} :G \to S$. 
This character either lies in $Z$ or it is  of the form $\xi = f^*({\rho})$ for $\rho \in W_{2g(\G) -2}(\G)$ (in which case $i >2g(\G) - 2$). These two cases are treated in slightly different ways:

-- If the case $\xi \in Z$ holds, $\xi$ will give a positive contribution to $b_1(H_{\xi})$ that is not matched by any term in the right-hand-side of Equation   (\ref{eq:b1l}).

-- If the case $\xi = f^{*}(\rho)$ holds, it is immediate to verify that $\rho$ factors through $\beta_{\xi}\colon \Gamma \to S/\a_\xi(K) \cong S$. However, the contribution of $\rho$ to $b_1(\Lambda_{\xi})$ is at least $i - 2g(\G) + 2 > 0$ short of the contribution of $\xi = f^{*}( \rho)$ to $b_1(H_{\xi})$, as \[ \mbox{dim } H^{1}(G;\C_{\xi}) \geq i > \mbox{dim } H^{1}(\G;\C_{\rho}) = 2g(\G) - 2. \]

 In either case, the outcome is that $b_1(H_{\xi}) > b_1(\Lambda_{\xi})$, which violates the assumption that $X$ has virtual Albanese dimension one. 

Summing up, ${\widehat f}\colon W_{i}(\G) \to W_{i}(G)$ is a bijection. These sets coincide therefore with the connected components of the respective character variety (for $i \leq i \leq 2g(\G) - 2$), the trivial character (for $2g(\G) - 1 \leq i \leq 2g(\G)$), and are empty otherwise.  Whenever nonempty, both sets inherit a group structure   
as subsets of the respective character varieties. The map ${\widehat f}$ is the restriction of an homomorphism between these character varieties, hence an isomorphism as stated. 
\end{proof} 

\begin{remarks} 
\begin{enumerate} 
\item The bielliptic surfaces mentioned in Section \ref{sec:ref} are a clean example of the fact that we need more than Albanese dimension one to get the isomorphism of Theorem \ref{thm:gls}. These surfaces admit  genus one Albanese pencils without multiple fibers, and there exists an epimorphism $\alpha\colon G \to S$ with $S$ abelian  and $H= \ker ~ \a \cong \Z^4$ (see \cite[Section V.5]{BHPV04}). This entails that, for some $i \geq 1$, $W_{i}(G)$ is strictly larger than ${\widehat f}(W_i(\Z^2))$ (the difference being torsion characters, contributing to the first Betti number of $H$). 
\item The converse of Theorem \ref{thm:gls} is false: for instance, if $G$ denotes the fundamental group of the Cartwright--Steger surface, $W_{i}(G) = {\widehat f}(W_i(\Z^2))$, but $va(G) > 1$ (see \cite{Sto18}).
\end{enumerate}
\end{remarks}

We want to discuss now a result that may prevent the existence of new simple examples of K\"ahler groups with $va(G) = 1$, under the assumption that the group satisfies residual properties akin to those holding for most irreducible $3$--manifold groups,
namely being virtually RFRS (in particular, that holds for the fundamental group of irreducible manifolds that have hyperbolic pieces in their geometric decomposition). This class of groups was first introduced by Agol in the study of virtual fibrations of $3$--manifold groups: A group $G$ is RFRS if there exists a filtration $\{G_{i}|i \geq 0\}$ of finite index normal subgroups 
$G_{i} \unlhd_{f} G_{0} = G$ with $\bigcap_{i} G_{i} = \{1\}$ whose successive quotient maps $\a_{i}\colon G_{i} \to G_i/G_{i+1}$  factorize through the maximal free abelian quotient: 
 \begin{equation} \label{eq:rfrs} \xymatrix@=4pt{ 
 1\ar[rr] & & G_{i+1} \ar[rr] & & G_{i} \ar[dr]  \ar[rr]^{\a_{i}} & & G_{i}/G_{i+1} \ar[rr] & & 1
\\  & & & & & H_1(G_{i})/\mbox{Tor} \ar[ur] &  &} \end{equation}

Subgroups of the direct product of surface groups and abelian groups are virtually RFRS. The largest source of virtually RFRS group we are aware of is given by subgroups of right--angled Artin groups (RAAGs), that are virtually RFRS by  \cite{Ag08}. However, this class does not give us new examples, as Py proved in \cite[Theorem A]{Py13} that all K\"ahler groups that are subgroups of RAAGs are in fact virtually subgroups of the product of surface groups and abelian groups.

 We have the following, that combined with Lemma \ref{lemma:equivalent} gives Theorem \ref{thm:vrfrs}:
\begin{theorem} Let $G$ be a virtually RFRS K\"ahler group. Then for any K\"ahler manifold $X$ such that $\pi_{1}(X) = G$ either
there exists a finite cover ${\widetilde X}$ of $X$ with Albanese dimension greater than one, or $G$ is virtually a surface group. \end{theorem}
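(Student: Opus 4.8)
The plan is to combine Theorem~\ref{thm:equivalent} with Agol's virtual fibering machinery in the form developed for RFRS groups. Suppose $X$ is a K\"ahler manifold with $\pi_1(X) = G$, that $G$ is virtually RFRS, and that $va(G) = 1$ (otherwise some finite cover already has Albanese dimension $> 1$ and we are done). Passing to a finite-index subgroup, we may assume that $G$ itself is RFRS and that the Albanese pencil lifts, so by Theorem~\ref{thm:equivalent} (applied to $G$ and all its finite-index subgroups, using $va(G)=1$) no finite-index subgroup of $G$ algebraically fibers, and for every finite cover the Albanese map is an irrational pencil $f\colon \widetilde X \to \widetilde\S$ onto a positive-genus orbifold with $f^*\colon H^1(\pi_1^{\orb}(\widetilde\S);\R) \xrightarrow{\cong} H^1(\widetilde X;\R)$, i.e.\ the exceptional character locus of Delzant's formula~(\ref{eq:ex}) is the whole sphere. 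The goal is to show that under RFRS this forces $K = \ker(f\colon G \to \G)$ to be finite, so that $G$ is commensurable to the surface group $\G$.

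The key steps, in order, are as follows. First, record the short exact sequence $1 \to K \to G \xrightarrow{f} \G \to 1$ with $K$ finitely generated and $\G = \pi_1^{\orb}(\S)$; after a further finite-index passage we may take $\G$ to be an honest surface group of genus $\geq 1$ (flat or hyperbolic orbifolds being virtually surface groups, by \cite{Sc83}). Second, invoke the RFRS property: for a RFRS group, a result in the spirit of Agol~\cite{Ag08} (as reworked for instance in \cite{Ag08} and the references of the excerpt) says that if $G$ does \emph{not} virtually algebraically fiber then in a suitable finite-index RFRS tower the Alexander-type / $\ell^2$ obstruction must vanish; concretely, for every primitive $\phi \in H^1(G_i;\Z)$ in the tower the kernel $\ker\phi$ fails to be finitely generated unless $\phi$ pulls back from the surface quotient. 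Third, combine this with the fact that the BNS invariant $\S^1(G_i)$ is empty for every $i$ (consequence of $va(G)=1$ plus Theorem~\ref{thm:equivalent}): this says every character is "fibered-obstructed", and feeding this back through the RFRS filtration $\bigcap_i G_i = \{1\}$ forces the iterated coinvariant homology of $K$ to be torsion at every finite stage. Fourth, conclude that $K$ must be finite: if $K$ were infinite and finitely generated, then (using that $G$, and hence $K$, is residually finite and that $K$ would have to have a finite-index subgroup with positive $b_1$, or else be finite by a Betti-number/Euler-characteristic argument on the surface bundle $\widetilde X \to \widetilde\S$) one produces a finite cover $\widetilde X$ whose Albanese pencil is irregularly fibered, i.e.\ $q_{\tilde f} > 0$, contradicting $va(G) = 1$ via Proposition~\ref{prop:samealb}. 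Hence $K$ is finite, $f\colon G \to \G$ has finite kernel, and $G$ is commensurable to a surface group.

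The delicate point — and the place where I expect the real work to sit — is the third and fourth steps: namely extracting, purely from "$\S^1$ is virtually empty and the group is RFRS", the conclusion that the fiber $K$ does not virtually acquire first Betti number, equivalently that no finite cover is irregularly fibered. Agol's theorem is usually phrased as "RFRS $+$ fibered face" $\Rightarrow$ virtual fibering; here I want the contrapositive packaged as "RFRS $+$ never virtually fibers" $\Rightarrow$ "the only source of $b_1$ is the surface base", which requires knowing that the RFRS tower detects the BNSR/Bieri--Strebel obstruction sharply enough that the vanishing of $\S^1$ at every finite stage propagates to the fiber. One clean way to organize this is via the relative version: apply Agol's argument to the surface bundle structure $K \to G \to \G$, observing that $H_1(K;\Q)_{\G}$ finite is exactly the statement $a(G) = 1$, and that a nontrivial RFRS descent would, by the usual Alexander norm / Thurston norm estimate, either open up a fibered cone (giving virtual algebraic fibering, excluded) or show $b_1$ of the fiber cannot grow in covers that keep the base fixed. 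Modulo that, the rest is bookkeeping with diagram~(\ref{eq:diag}) and Proposition~\ref{prop:unique}, which guarantees there are no competing irrational pencils to worry about.

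Finally, to package the output in the form stated: given the dichotomy above, either we found a finite cover $\widetilde X$ with $a(\widetilde X) > 1$, or $K$ is finite and $f\colon G \to \G$ exhibits $G$ as commensurable to the surface group $\G$. Combined with Theorem~\ref{thm:equivalent} this is precisely Theorem~\ref{thm:vrfrs}: a virtually RFRS K\"ahler group either virtually algebraically fibers or is commensurable to a surface group. I would write the proof of the displayed theorem first in this "$a(X)=1 \Rightarrow$ commensurable to a surface group" form, since that is the genuinely new content, and let the deduction of Theorem~\ref{thm:vrfrs} be a one-line combination with the equivalence of conditions (1)–(3).
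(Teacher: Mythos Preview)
Your proposal has a genuine gap at the point you yourself flag as ``delicate'': steps three and four are not carried out, and the machinery you invoke does not do the job. Agol's criterion is a statement about $3$--manifold groups via the Thurston norm and fibered cones; there is no analogue available here, and you never say what replaces it. Your fallback in step four --- that an infinite finitely generated $K$ must virtually have positive $b_1$, ``or else be finite by a Betti-number/Euler-characteristic argument'' --- is not justified. Even granting that $K$ inherits RFRS from $G$ (hence $b_1(K)>0$ if $K\neq 1$), you still need the image of $H_1(K\cap G_i)\to H_1(G_i)$ to have positive rank for some $i$ to produce an irregularly fibered cover, and that is exactly the statement you are trying to prove. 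The ``Euler characteristic argument on the surface bundle'' is a placeholder: $X$ is not a surface bundle in general.

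The paper's proof is much shorter and uses none of this machinery. The key observation you are missing is that the hypothesis $va(G)=1$ says precisely that for \emph{every} finite-index $G_j\leq G$ the restricted map $f_j\colon G_j\to\Gamma_j$ is Albanese, hence induces an isomorphism $H_1(G_j)/\mathrm{Tor}\xrightarrow{\cong}H_1(\Gamma_j)$. Now feed this directly into the RFRS definition: the quotient $\alpha_j\colon G_j\to G_j/G_{j+1}$ factors through $H_1(G_j)/\mathrm{Tor}\cong H_1(\Gamma_j)$, hence through $f_j$. So for any $\gamma\in G$, choosing $j$ with $\gamma\in G_j\setminus G_{j+1}$ gives $\alpha_j(\gamma)\neq 1$, whence $f_j(\gamma)=f(\gamma)\neq 1$. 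Thus $f$ is injective and $K=1$ (not merely finite). No contrapositive of Agol, no Alexander norms, no irregular-fibering argument: just the definition of RFRS and the fact that ``Albanese at every level'' means the free abelianization already lives on the base.
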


\begin{proof} After going to a suitable finite cover can assume that $X$ has RFRS fundamental group $G$, with associated sequence  $\{G_{i}\}$. From the definition above (see the sequence in (\ref{eq:rfrs})) a nontrivial RFRS group has positive first Betti number. In light of this, it is sufficient to show that if $G$ is a RFRS group with Albanese map $f \colon G \to \G$ and virtual Albanese dimension one, then it is a surface group. (This implies, because of the initial cover to get $G$ RFRS, the theorem as stated.) Recall that  we have a short exact sequence  \[ \xymatrix{ 1 \ar[r] & K \ar[r] & G \ar[r]^{f} & \Gamma \ar[r] & 1} \] where $\Gamma$ can be assumed to be a surface group and $K$ is finitely generated. We claim that if $X$ has virtual Albanese dimension one, then $K$ is actually trivial, i.e.\ $f$ is injective. Let $\gamma \in G$ be a nontrivial element; the assumption that $\bigcap_{i} G_{i} = \{1\}$ implies that there exist an index $j$ such that $\gamma \in G_{j} \setminus G_{j+1}$.
 Consider now the diagram
 \begin{equation} \label{fig:rfrs} \xymatrix@=4pt{ 
 1\ar[rr] & & K \cap G_{j} \ar[rr] & & G_{j} \ar[dr]  \ar[rr]^{f_{j}} \ar[dd]_{\a_j} & & \G_{j}\ar[rr] \ar@{-->}[dl] & & 1
\\  & & & & & H_1(G_{j})/\mbox{Tor} \ar[dl] &  & 
\\  & & & & G_{j}/G_{j+1} \ar[dd] & &  &
\\ \\ & & & & 1  & &  & } \end{equation}
where $f_j\colon G_{j}  \to \Gamma_{j}$  is the restriction epimorphism between finite index subgroups of $G$ and $\Gamma$ respectively determined by $G_{j} \unlhd G$ as in the commutative diagram of  (\ref{eq:diag}). This map represents, in homotopy, the pencil $f_{j}\colon X_{j} \to \Sigma_{j}$ of the cover of $X$ associated to $G_{j}$. By assumption, this pencil is Albanese.
 This entails that there is an isomorphism
 \[ (f_{j})_{*}\colon  H_1(G_{j})/\mbox{Tor} \stackrel{\cong}{\longrightarrow}  H_1(\G_{j}).\] Composing the inverse of this isomorphism with the maximal free abelian quotient of $\G_{j}$ gives the map denoted with a dashed arrow in the diagram of Equation  (\ref{fig:rfrs}). By the commutativity  of that diagram we deduce that the quotient map $\a_{j}\colon G_{j} \to G_{j}/G_{j+1}$ factors through $f_{j}\colon G_{j} \to \Gamma_{j}$. As $\gamma \in G_{j} \setminus G_{j+1}$, the image $\a_{j}(\gamma) \in G_{j}/G_{j+1}$ is nontrivial, hence so is $f_{j}(\gamma)$. This implies that $f(\gamma) =  f_{j}(\gamma) \in \Gamma$ is nontrivial, i.e.\ $f \colon G \to \G$ is injective.
\end{proof}


\end{document}